\theoremstyle{plain}
\newtheorem{theorem}{Theorem}[section]
\newtheorem{proposition}[theorem]{Proposition}
\newtheorem{corollary}[theorem]{Corollary}
\newtheorem{lemma}[theorem]{Lemma}
\theoremstyle{definition}
\newtheorem{definition}[theorem]{Definition}
\newtheorem{example}[theorem]{Example}
\newtheorem{remark}[theorem]{Remark}
\numberwithin{equation}{section}
\DeclareMathOperator{\image}{Im}
\DeclareMathOperator{\kernel}{Ker}
\DeclareMathOperator{\trace}{tr}
\DeclareMathOperator{\pfaffian}{pf}
\newcommand{\eigensp}[2]{V_{#1}(#2)}
\newcommand{\geigensp}[2]{\tilde{V}_{#1}(#2)}
\newcommand{\sympleigensp}[4]{V_{{#2},{#2}^{*#1}}(#3,#4)}
\newcommand{\symplgeigensp}[4]{\tilde{V}_{{#2},{#2}^{*#1}}(#3,#4)}
\newcommand{\symplcharpoly}{\varphi}
\newcommand{\cfield}{\mathbb{K}}
\newcommand{\ccfield}{\mathbb{K}(s)}
\newcommand{\field}{\mathbb{F}}
\newcommand{\alter}{A}
\newcommand{\zeromatrix}[1]{O}
\newcommand{\idmap}[1]{E}
\newcommand{\sendo}[2]{({#1}-s\idmap{})({#2}-s\idmap{})}
\newcommand{\ssendo}{({M}-s\idmap{})({M}^{*\omega}-s\idmap{})}
\newcommand{\scharpoly}[3]{\varphi_{\sendo{#1}{#2}}({#3})}
\newcommand{\exspace}{V\otimes_{\cfield}\ccfield}
\begin{document}

\title{The symplectic characteristic polynomial}
\author{Kohei Ichizuka}
\address{Department of Mathematics, Saitama University, 255 Shimo-Okubo, Sakura-Ku,
Saitama 338-8570, Japan}
\email{k.ichizuka.929@ms.saitama-u.ac.jp}

\subjclass[2020]{15A20, 15A21}

\keywords{Characteristic polynomial, Symplectic similarity, Symplectically diagonalizability}

\date{\today}


\begin{abstract}
We introduce the notion of the symplectic characteristic polynomial of an endomorphism of a symplectic vector space. 
This is a polynomial in two variables and can be considered as a generalization of the characteristic polynomial of the endomorphism in the context of symplectic linear algebra. One of the goal of this paper is to  prove that the symplectic characteristic polynomial is a complete symplectic invariant of symplectically diagonalizable endomorphisms.
\end{abstract}

\maketitle

\section{Introduction}
Let $(V,\omega)$ be a finite dimensional symplectic vector space over a field $\cfield$.
It is a fundamental question to classify endomorphisms of $V$ with respect to symplectic similarity. 
Endomorphisms which are self-adjoint, anti-self-adjoint or preserve the symplectic form $\omega$ have been classified 
when the characteristic of $\cfield$ is not $2$
(see 
\cite[Theorem~4]{waterhouse2005structure}, 
\cite[Theorem~2.2]{ciampi1973classification},
\cite[Section~9]{springer2022symplectic}).
Such an endomorphism is symplectically normal (Definition~\ref{definition sympl similar and normal} (2)) and the classification of symplectically normal endomorphisms is not known even though they are diagonalizable.
An endomorphism is symplectically diagonalizable if and only if
the endomorprhism is symplectically normal and diagonalizable (\cite[Theorem~13]{de2016diagonalizability}).
A motivation of the paper is to classify symplectically  diagonalizable endomorphisms up to symplectic similarity. 
\par
Let us recall several facts on the characteristic polynomial of an endomorphism $M$ of a vector space $V$.
Cayley-Hamilton theorem implies that $V$ admits  the direct sum  decomposition into the generalized eigenspaces of $M$ when all eigenvalues of $M$ are in $\cfield$.
The characteristic polynomial is a complete invariant of diagonalizable endomorphisms. Moreover, if the number of eigenvalues of $M$ is equal to the dimension of $V$, then $M$ is diagonalizable.
\par
Now we consider a symplectic vector space $(V,\omega)$. Let $M$ be an endomorphism of $V$.
Suppose that $M$ is symplectically normal and that all eigenvalues of $M$ are in $\cfield$. It can be proved by using the same idea in (\cite[Lemma~1]{gohberg1990classification}) that $V$ is the symplectically orthogonal sum of the symplectic subspaces associated with two eigenvalues of $M$. This implies that we have projections onto these subspaces. 
On the other hand, if one considers the endomorphism $(M-s\idmap{})(M^{*\omega}-s\idmap{})$ of the $\ccfield$-vector space  $V\otimes_{\cfield}\ccfield$, we have other projections, whose images are  the generalized eigenspaces of $\ssendo$.
Here $E$ is the identity map and $\ccfield$ is the field  of  rational functions over $\cfield$.
\par
In this paper,
we show that these generalized eigenspaces and projections correspond to the previous symplectic subspaces and projections respectively.
%
We introduce the symplectic characteristic polynomial as 
the Pfaffian of  $(M-s\idmap{})^*\omega-t\omega$.
We prove that 
the symplectic characteristic polynomial is a complete  invariant of symplectically diagonalizable endomorphisms up to symplectic similarity.
Moreover, we give a sufficient condition for symplectically diagonalizability.
%
\par
The paper is organized as follows. Section~\ref{symplectic linear algebra} contains  
several fundamental facts on symplectic linear algebra. 
In Section~\ref{symplectic characteristic polynomial}, we define the symplectic characteristic polynomial of an endomorphism and prove several properties. 
%
In Section~\ref{characteristic polynomial of two endomorphisms}, we study the characteristic 
polynomial of 
the endomorphism 
associated with
two endomorphisms $M$, $N$ of a (not necessary symplectic) vector space.
In Section~\ref{Applications of the symplectic characteristic polynomial}, 
we 
prove that the symplectic characteristic polynomial is a complete invariant of symplectically diagonalizable endomorphisms
and give a sufficient condition on symplectically diagonalizability.
\par

\section{Preliminaries}\label{symplectic linear algebra}
We first introduce the following notation. 
\begin{itemize}
\item
We denote  the field  of  rational functions over $\cfield$ by $\ccfield$.
\item
The set of  endomorphisms of a vector space $V$ is denoted by $\operatorname{End}(V)$. By abuse of notation, $\idmap{}$ denotes the identity map of $V$ and $O$ denotes the zero map of $V$.
\item
The characteristic polynomial of an endomorphism $M$ is denoted by $\varphi_M(t)$. 
\item
Let $M,N\in \operatorname{End}(V)$ and $\lambda, \mu \in \cfield$. We define the subspaces $\eigensp{M}{\lambda}$, $\geigensp{M}{\lambda}$ by
\begin{align*}
\eigensp{M}{\lambda}&=\kernel(M-\lambda E),\\
\geigensp{M}{\lambda}&=\kernel(M-\lambda E)^n \ \text{where $\dim_{\cfield} V=n$}.
\end{align*}
The space $\eigensp{M}{\lambda}$ (resp. $\geigensp{M}{\lambda}$) is an eigenspace (resp. a generalized  eigenspace ) if it is not zero.
\par
We also define the subspaces $\eigensp{M,N}{\lambda,\mu}$, $\geigensp{M,N}{\lambda, \mu}$ of $V$ by
\begin{align*}
\eigensp{M,N}{\lambda,\mu}&=\left(\eigensp{M}{\lambda} \cap \eigensp{N}{\mu}\right)+\left(\eigensp{M}{\mu} \cap \eigensp{N}{\lambda}\right), \\
\geigensp{M,N}{\lambda, \mu}&=\left(\geigensp{M}{\lambda} \cap \geigensp{N}{\mu}\right) +\left( \geigensp{M}{\mu} \cap \geigensp{N}{\lambda}\right).
\end{align*}
\item Let $M\in \operatorname{End}(V)$ and  $\omega$  a bilinear form.  Let $\field$ be a extension field of $\cfield$.  We use the same letter $M$ (resp. $\omega$) for the natural extension 
of $M$ (resp. $\omega$)
to the endomorphism (resp. the bilinear form) of $V\otimes_{\cfield}\field$.
\end{itemize}
%
%
%
%
%
%
%
%
%
\par
Now we recall several fundamental facts on symplectic linear algebra. 
\par
Let $V$ be a vector space over a field $\cfield$ and let $\dim_{\cfield} V=2n$. A bilinear form $\omega$ of $V$ is said to be  \textit{symplectic} if 
\begin{itemize}
\item 
$\omega$ is alternating, that is, $\omega(v,v)=0$ for all $v\in V$ and
\item 
$\omega$ is  non-degenerate, that is, a linear map 
\[
\omega^{\flat}: V\rightarrow V^{*} \quad v  \mapsto \omega(\cdot,v)
\]
is an isomorphism.
\end{itemize} 
 We call $(V,\omega)$ a \textit{symplectic vector space}. 
A representation matrix of  an alternating matrix with respect to some bases is anti-symmetric and the diagonal entries are zero.  
This matrix has determinant zero when the size of the matrix is odd and a symplectic vector space must be even-dimensional. 
\par
The set of automorphisms which preserve $\omega$ is 
denoted by
$\operatorname{Sp}(V,\omega)$. Since the Pfaffian of a matrix which represents $\omega$ is not zero, any automorphism in $\operatorname{Sp}(V,\omega)$ has determinant $1$.
\par
Let $(V,\omega)$ be a symplectic vector space and let $M\in \operatorname{End}(V)$. The \textit{symplectic 
adjoint endomorphism} $M^{*\omega}$ of $M$ is an endomorphism  of $V$ defined by
\[
M^{*\omega}=(\omega^{\flat})^{-1}M^*\omega^{\flat}
\]
where $M^*$ is the dual map of $M$. 
It is easily shown that the following properties hold for any $M, N \in \operatorname{End}(V)$.
\begin{itemize} 
\item
$\omega(M v,w)=\omega(v,M^{*\omega}w)$ for all $v,w \in V$. 
\item 
$M\in \operatorname{Sp}(V,\omega)$ if and only if $M^{*\omega}=M^{-1}$.
\item  
$(M^{*\omega})^{*\omega}=M.$
\item  
$(MN)^{*\omega}=N^{*\omega}M^{*\omega}$.
\item 
The characteristic polynomial of $M$ is equal to that of $M^{*\omega}$.
\end{itemize} 
\begin{definition}\label{definition sympl similar and normal}
\quad
\begin{itemize}
\item [(1)]
An endomorphism $M$ is said to be \textit{symplectically similar} to an endomorphism $N$ if there exists  $P\in \operatorname{Sp}(V,\omega)$ such that $P^{-1}MP=N$.
\item [(2)]
An endomorphism $M$ is said to be \textit{symplectically normal} if $MM^{*\omega}=M^{*\omega}M$.
\end{itemize}
\end{definition}
\par 
Let $W$ be a subspace of $V$. We define the \textit{symplectically 
orthogonal subspace} $W^{\perp\omega}$ 
by 
\[
W^{\perp\omega}=\{v\in V | \ \omega(v,w)=0 \quad \forall w \in W\}.
\]
As in the case of an inner product, we have the following. 
\begin{proposition}\label{propperp}
Let 
$M\in \operatorname{End}(V)$
 and let $W$  be a subspace of $V$. Then
\begin{itemize}
\item [$(1)$] $\dim W+\dim W^{\perp\omega}=\dim V$.
\item [$(2)$] $(W^{\perp\omega})^{\perp\omega}=W$.
\item [$(3)$] $\kernel M=(\image M^{*\omega})^{\perp\omega}$.
\item [$(4)$] $M(W) \subset W$ implies that $M^{*\omega}(W^{\perp\omega}) \subset W^{\perp\omega}.$
\item [$(5)$]  If $\lambda\neq \mu$, then 
$
\geigensp{M}{\lambda}\subset \geigensp{M^{*\omega}}{\mu}^{\perp\omega}.
$
\end{itemize}
\end{proposition}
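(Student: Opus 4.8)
The plan is to handle parts (1)--(4) by the standard non-degenerate-pairing arguments and to reduce part (5) to a single computation carried out inside an invariant subspace, so that the location of the eigenvalues of $M$ plays no role. For (1), note that $v\in W^{\perp\omega}$ exactly when the functional $\omega^{\flat}(v)=\omega(\cdot,v)$ vanishes on $W$; hence $W^{\perp\omega}=(\omega^{\flat})^{-1}(W^{\circ})$, where $W^{\circ}\subset V^{*}$ is the annihilator of $W$, and since $\omega^{\flat}$ is an isomorphism and $\dim W^{\circ}=\dim V-\dim W$, part (1) follows. For (2), the inclusion $W\subset(W^{\perp\omega})^{\perp\omega}$ is immediate from the alternating property of $\omega$ ($w\in W$, $v\in W^{\perp\omega}$ give $\omega(w,v)=-\omega(v,w)=0$), and two applications of (1) show that both sides have dimension $\dim W$, so they coincide.

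For (3), using non-degeneracy, $v\in\kernel M$ iff $Mv=0$ iff $\omega(Mv,u)=0$ for all $u\in V$; since $\omega(Mv,u)=\omega(v,M^{*\omega}u)$, this says precisely that $\omega(v,\cdot)$ vanishes on $\image M^{*\omega}$, i.e. $v\in(\image M^{*\omega})^{\perp\omega}$. For (4), if $M(W)\subset W$ and $v\in W^{\perp\omega}$, then for every $w\in W$ we have $\omega(w,M^{*\omega}v)=\omega(Mw,v)=0$ because $Mw\in W$; hence $M^{*\omega}v\in W^{\perp\omega}$.

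Part (5) is the substance. Put $U=\geigensp{M}{\lambda}$, $W=\geigensp{M^{*\omega}}{\mu}$, and $d=\dim_{\cfield}V$, so that $U=\kernel(M-\lambda E)^{d}$ and $W=\kernel(M^{*\omega}-\mu E)^{d}$; I must show $\omega(v,w)=0$ for all $v\in U$, $w\in W$. Since $W$ is $M^{*\omega}$-invariant, the restriction of $M^{*\omega}-\lambda E$ to $W$ equals $(M^{*\omega}-\mu E)|_{W}+(\mu-\lambda)E$, a sum of a nilpotent operator and a nonzero scalar operator that commute; hence $(M^{*\omega}-\lambda E)|_{W}$, and therefore its $d$-th power, is invertible on $W$. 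Given $w\in W$, choose $w'\in W$ with $(M^{*\omega}-\lambda E)^{d}w'=w$. Because the symplectic adjoint is additive and satisfies $(M^{*\omega})^{*\omega}=M$ and $(AB)^{*\omega}=B^{*\omega}A^{*\omega}$, we have $\bigl((M^{*\omega}-\lambda E)^{d}\bigr)^{*\omega}=(M-\lambda E)^{d}$, so
\[
\omega(v,w)=\omega\bigl(v,(M^{*\omega}-\lambda E)^{d}w'\bigr)=\omega\bigl((M-\lambda E)^{d}v,\,w'\bigr)=\omega(0,w')=0 .
\]
(Alternatively one can avoid the inversion: iterating the identity $\omega((M-\lambda E)u,w)=\omega(u,(M^{*\omega}-\mu E)w)+(\mu-\lambda)\omega(u,w)$ expresses $(\mu-\lambda)^{2d}\,\omega(v,w)$ as a binomial combination of the terms $\omega\bigl((M-\lambda E)^{2d-j}v,\,(M^{*\omega}-\mu E)^{j}w\bigr)$ for $0\le j\le 2d$, each of which vanishes since either $2d-j\ge d$ or $j\ge d$.)

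I expect part (5) to be the only genuine obstacle, and even it is mild: the point is that we are not assuming the eigenvalues of $M$ lie in $\cfield$, so $V$ need not decompose into generalized eigenspaces and one cannot conclude by a dimension count against $\bigoplus_{\nu\neq\mu}\geigensp{M}{\nu}$. The computation above sidesteps this by working entirely inside $W=\geigensp{M^{*\omega}}{\mu}$, where $M^{*\omega}-\mu E$ is genuinely nilpotent and $M^{*\omega}-\lambda E$ genuinely invertible. Parts (1)--(4) are routine.
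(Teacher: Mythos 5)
Your proof is correct and, for the one nontrivial item $(5)$, rests on the same key idea as the paper: since $\lambda\neq\mu$, the relevant shifted operator is nilpotent plus a nonzero scalar on the generalized eigenspace, hence invertible there, and the adjoint identity transfers this across $\omega$. The only difference is cosmetic --- the paper inverts $(M-\mu \idmap{})^{2n}$ on $\geigensp{M}{\lambda}$ and then appeals to items $(2)$ and $(3)$ to identify $\image(M-\mu\idmap{})^{2n}$ with $\geigensp{M^{*\omega}}{\mu}^{\perp\omega}$, whereas you invert $(M^{*\omega}-\lambda \idmap{})^{2n}$ on $\geigensp{M^{*\omega}}{\mu}$ and pair directly --- and your treatment of $(1)$--$(4)$ matches the paper's.
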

\begin{proof}
$(1)$
It follows that $W^{\perp\omega}=\kernel \iota_W^*\omega^{\flat}$ where $\iota_W$ is the inclusion.
Since a linear map $\iota_W^*\omega^{\flat}:V\rightarrow W^*$ is surjective,
we have $(1)$.
$(2)$
It is clear that $W \subset (W^{\perp\omega})^{\perp\omega}$. Applying $(1)$ for $W^{\perp\omega}$, we have $\dim (W^{\perp\omega})^{\perp\omega}=\dim W$. Hence, we get $(2)$.
$(3)$
It is easy to see that $\kernel M\subset(\image M^{*\omega})^{\perp\omega}$.
Let $v_0\in (\image M^{*\omega})^{\perp\omega}$. Then $\omega(M v_0, v)=\omega(v_0, M^{*\omega}v)=0$ for all $v\in V$.
This implies that $v_0 \in \kernel M$. Hence, we have $(3)$.
The items
$(4)$ are easily shown.
$(5)$
Suppose that $\lambda \neq \mu$.
Since $(M-\mu \idmap{})^{2n}|_{\geigensp{M}{\lambda}}$ is an automorphism, 
$\geigensp{M}{\lambda}=\image(M-\mu \idmap{})^{2n}\cap \geigensp{M}{\lambda}\subset\image(M-\mu \idmap{})^{2n}$.
Therefore, by $(2)$ and $(3)$, we get $(5)$.
\end{proof}

A subspace $W$ of $V$ is said to be 
\begin{itemize}
\item 
\textit{symplectic} if $W\cap W^{\perp\omega}=\{0\}$, 
\item
\textit{isotropic} if $W\subset W^{\perp\omega}$, 
\item
\textit{coisotropic} if $W^{\perp\omega}\subset W$,
\item
\textit{Lagrangian} if $W= W^{\perp\omega}$. 
\end{itemize}
\par
Since $(W^{\perp\omega})^{\perp\omega}=W$, we have that 
$W$ is symplectic if and only if $W^{\perp\omega}$ is symplectic.\par
The item $(1)$ in 
Proposition~\ref{propperp}  shows that if $W$ is isotropic, then $\dim_{\cfield}W\leq n$ and that  $W$ is Lagrangian if and only if $W$ is isotropic and $\dim_{\cfield} W=n$.
\begin{proposition}\label{prop skew ham sympl subsp}
Let $S\in \operatorname{End}(V)$. Suppose that $S^{*\omega}=S$.
Then $\geigensp{S}{\lambda}$ is a symplectic subspace of $(V,\omega)$.
\end{proposition}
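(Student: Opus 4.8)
The statement to prove is that $W := \geigensp{S}{\lambda}$ is symplectic, i.e.\ that $W \cap W^{\perp\omega} = \{0\}$. If $\lambda$ is not an eigenvalue of $S$ then $W = \{0\}$ and there is nothing to do, so assume $\lambda$ is an eigenvalue. The plan is to pin down $W^{\perp\omega}$ explicitly — it will turn out to be the \emph{image} of a high power of $S - \lambda\idmap{}$ — and then invoke Fitting's lemma, which says precisely that the kernel and the image of such a power meet only in $0$.

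Write $A = S - \lambda\idmap{}$ and $N = \dim_{\cfield} V$, so that $W = \kernel A^{N}$ by definition. First I would record the routine fact that $M \mapsto M^{*\omega}$ is $\cfield$-linear and reverses products; combined with $S^{*\omega} = S$ and $\idmap{}^{*\omega} = \idmap{}$ this gives $\bigl(A^{N}\bigr)^{*\omega} = \bigl(A^{*\omega}\bigr)^{N} = (S^{*\omega} - \lambda\idmap{})^{N} = A^{N}$, so $A^{N}$ is self-adjoint. Applying Proposition~\ref{propperp}(3) to the endomorphism $A^{N}$ then yields $W = \kernel A^{N} = \bigl(\image A^{N}\bigr)^{\perp\omega}$, and taking the symplectically orthogonal subspace of both sides and using Proposition~\ref{propperp}(2) gives $W^{\perp\omega} = \image A^{N}$.

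It then remains to check that $\kernel A^{N} \cap \image A^{N} = \{0\}$. Since $N = \dim_{\cfield} V$, the ascending chain $\kernel A \subset \kernel A^{2} \subset \cdots$ has already stabilized by the $N$-th step, so if $v = A^{N}u$ with $A^{N}v = 0$ then $A^{2N}u = 0$, hence $u \in \kernel A^{2N} = \kernel A^{N}$ and $v = A^{N}u = 0$. Combining the previous paragraph with this, $W \cap W^{\perp\omega} = \kernel A^{N} \cap \image A^{N} = \{0\}$, which is the assertion. I do not expect a genuine obstacle: the only points needing attention are the small bookkeeping lemma on the symplectic adjoint of a polynomial in $S$ and the standard stabilization of kernels underlying Fitting's lemma. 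It is worth noting that this route uses nothing about whether the remaining eigenvalues of $S$ lie in $\cfield$, so no passage to an extension field is needed; one could alternatively obtain $W^{\perp\omega} \supseteq \geigensp{S}{\mu}$ for $\mu \neq \lambda$ from Proposition~\ref{propperp}(5), but that closes the argument cleanly only when $S$ is split over $\cfield$.
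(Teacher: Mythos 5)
Your proof is correct and follows essentially the same route as the paper: identify $\geigensp{S}{\lambda}^{\perp\omega}$ with $\image(S-\lambda\idmap{})^{\dim V}$ via items (2) and (3) of Proposition~\ref{propperp} (using that a power of the self-adjoint $S-\lambda\idmap{}$ is again self-adjoint), and then conclude by the Fitting-type fact that the kernel and image of that power intersect trivially. You merely spell out two steps the paper leaves implicit, namely the self-adjointness of the power and the stabilization of the kernel chain.
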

\begin{proof}
It follows from the item $(2)$ and $(3)$ in
Proposition~\ref{propperp} that $\geigensp{S}{\lambda}^{\perp\omega}=\image(S-\lambda E)^{2n}$.
This implies that $\geigensp{S}{\lambda}\cap \geigensp{S}{\lambda}^{\perp\omega}=\{0\}$.
\end{proof}

\begin{lemma}\label{lemma Lag split}
Let $M\in \operatorname{End}(V)$ and let $W=\symplgeigensp{\omega}{M}{\lambda}{\mu}
$.
Suppose that
$\lambda\neq \mu$ 
and that $W$ is a  symplectic subspace of $(V,\omega)$.
Then 
$\geigensp{M}{\lambda}\cap \geigensp{M^{*\omega}}{\mu}$, $\geigensp{M}{\mu}\cap \geigensp{M^{*\omega}}{\lambda}$ are Lagrangian subspaces of $(V,\omega)$.
\end{lemma}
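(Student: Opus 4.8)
The plan is to set $W_1=\geigensp{M}{\lambda}\cap \geigensp{M^{*\omega}}{\mu}$ and $W_2=\geigensp{M}{\mu}\cap \geigensp{M^{*\omega}}{\lambda}$, so that $W=W_1+W_2$ by definition, to equip $W$ with the restriction $\omega|_W$ (a symplectic form by hypothesis), and then to prove in turn: (i) $W_1$ and $W_2$ are isotropic; (ii) the sum $W=W_1+W_2$ is direct; (iii) hence $\dim_\cfield W_1=\dim_\cfield W_2=\tfrac12\dim_\cfield W$, which for isotropic subspaces of $(W,\omega|_W)$ forces each of them to be Lagrangian.

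For (i): if $v,v'\in W_1$, then $v\in\geigensp{M}{\lambda}\subset\geigensp{M^{*\omega}}{\mu}^{\perp\omega}$ by Proposition~\ref{propperp}(5) (this is where $\lambda\neq\mu$ is used), while $v'\in\geigensp{M^{*\omega}}{\mu}$, so $\omega(v,v')=0$; thus $W_1$ is isotropic. The same argument with $\lambda$ and $\mu$ interchanged — Proposition~\ref{propperp}(5) then giving $\geigensp{M}{\mu}\subset\geigensp{M^{*\omega}}{\lambda}^{\perp\omega}$ — shows $W_2$ is isotropic. For (ii): any $v\in W_1\cap W_2$ lies in $\geigensp{M}{\lambda}\cap\geigensp{M}{\mu}$, and this intersection is $\{0\}$ because $\lambda\neq\mu$: indeed $M-\mu E$ is invertible on $\geigensp{M}{\lambda}$ (as already used in the proof of Proposition~\ref{propperp}(5)) but nilpotent on $\geigensp{M}{\mu}$. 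Therefore $W=W_1\oplus W_2$ and $\dim_\cfield W=\dim_\cfield W_1+\dim_\cfield W_2$.

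For (iii): by the remark following Proposition~\ref{propperp}, applied to the symplectic vector space $(W,\omega|_W)$, every isotropic subspace of $W$ has dimension at most $\tfrac12\dim_\cfield W$, with equality precisely when it is Lagrangian. Applying this to the isotropic subspaces $W_1$ and $W_2$ and adding gives $\dim_\cfield W=\dim_\cfield W_1+\dim_\cfield W_2\leq\dim_\cfield W$, so equality holds throughout and $\dim_\cfield W_1=\dim_\cfield W_2=\tfrac12\dim_\cfield W$; hence $W_1$ and $W_2$ are Lagrangian, as asserted (in the ambient $(V,\omega)$ this is the case $W=V$). I do not expect a genuine obstacle: the heart of the matter is a dimension count. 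The one thing to be careful about is that isotropy of $W_1$ and $W_2$ is forced entirely by the distinct-eigenvalue orthogonality of Proposition~\ref{propperp}(5) — no self-adjointness of $M$ is available or needed — while the hypothesis that $W$ is symplectic is used only at the last step, to promote the half-dimensional isotropic subspaces $W_1,W_2$ to Lagrangian ones.
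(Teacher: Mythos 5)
Your proof is correct and follows essentially the same route as the paper's: isotropy of $\geigensp{M}{\lambda}\cap\geigensp{M^{*\omega}}{\mu}$ and $\geigensp{M}{\mu}\cap\geigensp{M^{*\omega}}{\lambda}$ via Proposition~\ref{propperp}(5), directness of the sum from $\geigensp{M}{\lambda}\cap\geigensp{M}{\mu}=\{0\}$, and a dimension count forcing each summand to be Lagrangian in $(W,\omega|_W)$. You merely make explicit the step the paper leaves implicit, namely that the isotropy bound $\dim W_i\le\tfrac{1}{2}\dim_{\cfield}W$ together with $\dim W_1+\dim W_2=\dim_{\cfield}W$ forces equality.
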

\begin{proof}
The item $(5)$ in Proposition~\ref{propperp} gives that 
$\geigensp{M}{\lambda}\cap \geigensp{M^{*\omega}}{\mu}$, $\geigensp{M}{\lambda}\cap \geigensp{M^{*\omega}}{\mu}$ are isotropic subspaces of 
$(W,\omega|_W)$. 
Since 
$\geigensp{M}{\lambda}\cap 
\geigensp{M}{\mu}
=\{0\}$, 
we have
\[
\dim_{\cfield}(\geigensp{M}{\lambda}\cap \geigensp{M^{*\omega}}{\mu})=\dim_{\cfield}(\geigensp{M}{\mu}\cap \geigensp{M^{*\omega}}{\lambda})=\frac{1}{2}\dim_{\cfield}W.
\]
Therefore, 
$\geigensp{M}{\lambda}\cap \geigensp{M^{*\omega}}{\mu}$, $\geigensp{M}{\mu}\cap \geigensp{M^{*\omega}}{\lambda}$ are Lagrangian subspaces of $(W,\omega|_W)$.
\end{proof}
A basis $\mathcal{B}=(e_1,\dots,e_n,f_1,\dots,f_n)$ of $V$ is said to be \textit{symplectic} if  
\[
\omega(e_i,e_j)=0, \ \omega(f_i,f_j)=0, \ \omega(e_i,f_j)=\delta_{ij}, \quad i,j\in\{1,\dots,n\}.
\]
\begin{proposition}\label{prop exist sympl basis}
There exists a symplectic basis of $(V,\omega)$.
\end{proposition}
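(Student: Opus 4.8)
The plan is to argue by induction on $\dim_{\cfield}V=2n$, building the symplectic basis two vectors at a time by splitting off a hyperbolic plane.

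For the base case $n=0$ there is nothing to prove (the empty tuple works). For the inductive step, assume the result for all symplectic vector spaces of dimension $2(n-1)$. Pick any $e_1\in V$ with $e_1\neq 0$. Since $\omega$ is non-degenerate, $\omega^{\flat}(e_1)=\omega(\cdot,e_1)$ is a nonzero functional, so there is $v\in V$ with $\omega(e_1,v)\neq 0$; rescaling, set $f_1=\omega(e_1,v)^{-1}v$, so that $\omega(e_1,f_1)=1$. Let $W=\operatorname{span}_{\cfield}(e_1,f_1)$. A short direct computation shows $W$ is a symplectic subspace: if $v=ae_1+bf_1\in W\cap W^{\perp\omega}$, then $0=\omega(v,e_1)=-b$ and $0=\omega(v,f_1)=a$, so $v=0$; in particular $\dim_{\cfield}W=2$ and $(e_1,f_1)$ is a symplectic basis of $W$.

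Next I would show $V=W\oplus W^{\perp\omega}$ and that $\omega|_{W^{\perp\omega}}$ is again symplectic. The direct sum decomposition is immediate from $W\cap W^{\perp\omega}=\{0\}$ together with $\dim W+\dim W^{\perp\omega}=\dim V$, which is Proposition~\ref{propperp}(1). For non-degeneracy of the restriction: by Proposition~\ref{propperp}(2) we have $(W^{\perp\omega})^{\perp\omega}=W$, hence $W^{\perp\omega}\cap (W^{\perp\omega})^{\perp\omega}=W^{\perp\omega}\cap W=\{0\}$, so $W^{\perp\omega}$ is a symplectic subspace, and the restriction of the (alternating) form $\omega$ to it is alternating and non-degenerate. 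Thus $(W^{\perp\omega},\omega|_{W^{\perp\omega}})$ is a symplectic vector space of dimension $2(n-1)$.

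By the induction hypothesis, $W^{\perp\omega}$ admits a symplectic basis $(e_2,\dots,e_n,f_2,\dots,f_n)$. Since every vector of $W$ is $\omega$-orthogonal to every vector of $W^{\perp\omega}$, concatenating gives $(e_1,\dots,e_n,f_1,\dots,f_n)$, which spans $V=W\oplus W^{\perp\omega}$ and satisfies all the required relations $\omega(e_i,e_j)=\omega(f_i,f_j)=0$ and $\omega(e_i,f_j)=\delta_{ij}$; this is the desired symplectic basis, completing the induction. The only point requiring genuine care — as opposed to routine verification — is the claim that $W^{\perp\omega}$ inherits a symplectic (in particular non-degenerate) form, and that is exactly what Proposition~\ref{propperp}(1)--(2) delivers; everything else is bookkeeping.
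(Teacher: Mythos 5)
Your proof is correct and follows essentially the same route as the paper: induct on $n$, split off the hyperbolic plane $W=\operatorname{span}(e_1,f_1)$ with $\omega(e_1,f_1)=1$, use Proposition~\ref{propperp} to see that $W^{\perp\omega}$ is a symplectic complement, and apply the induction hypothesis there. You simply spell out a few verifications (that $W$ is symplectic, that the restriction to $W^{\perp\omega}$ is non-degenerate) that the paper leaves implicit.
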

\begin{proof}
We proof by induction on $n$.
Since $\omega$ is non-degenerate, there exist $e_1,f_1\in V$ such that $\omega(e_1,f_1)=1$. Hence, the assertion holds for $n=1$. We assume the assertion holds for $n-1$.
Let $W$ be a subspace spanned by $e_1, f_1$. Then $W$ is symplectic and so is $W^{\perp\omega}$. The induction hypothesis shows that  there exists a symplectic basis $(e_2,\dots, e_n, f_2,\dots, f_n)$ of $(W^{\perp\omega},\omega|_{W^{\perp\omega}})$.  It is clear that $(e_1,\dots, e_n, f_1,\dots, f_n)$ is a symplectic basis of $(V,\omega)$.
\end{proof}
\begin{lemma}\label{lemma sympl basis}
Let $L_1,L_2$ be Lagrangian subspaces of $(V,\omega)$. Suppose that  $L_1\cap L_2 =\{0\}$ and $(e_1,\dots, e_n)$ is a basis of  $L_1$. Then there exists a basis  
$
(f_1,\dots, f_n)$ of $L_2
$ such that  $(e_1,\dots,e_n, f_1, \dots, f_n)$ is a 
symplectic basis of $(V,\omega)$.
\end{lemma}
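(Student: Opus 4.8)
The plan is to dualize the situation so that we can apply the dual statement of Proposition~\ref{prop exist sympl basis}, working with the linear functionals $\omega(\cdot, e_i)$ rather than building the basis $(f_j)$ directly. First I would use $L_1 \cap L_2 = \{0\}$ together with $\dim L_1 = \dim L_2 = n$ and $\dim V = 2n$ to conclude $V = L_1 \oplus L_2$. Then, since $L_2$ is Lagrangian, $L_2 = L_2^{\perp\omega}$, and so the pairing $\omega$ induces a map $L_1 \to L_2^*$, $v \mapsto \omega(\cdot, v)|_{L_2}$; I would check this map is an isomorphism. Its injectivity follows because if $\omega(w, v) = 0$ for all $w \in L_2$ then $v \in L_2^{\perp\omega} = L_2$, but also $v \in L_1$, so $v \in L_1 \cap L_2 = \{0\}$; surjectivity then follows by dimension count, or symmetrically using that $L_1$ is Lagrangian.

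Next I would let $(e_1^*,\dots,e_n^*)$ be the basis of $L_1^*$ dual to $(e_1,\dots,e_n)$, and — again using that $L_1$ is Lagrangian so that $\omega$ induces an isomorphism $L_2 \xrightarrow{\sim} L_1^*$ in the analogous way, $u \mapsto \omega(u,\cdot)|_{L_1}$ — define $f_j \in L_2$ to be the unique vector with $\omega(f_j, e_i) = \delta_{ij}$ for all $i$, equivalently $\omega(e_i, f_j) = \delta_{ij}$. Since this map $L_2 \to L_1^*$ is an isomorphism and $(e_1^*,\dots,e_n^*)$ is a basis of $L_1^*$, the vectors $(f_1,\dots,f_n)$ form a basis of $L_2$. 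It then remains to verify the three families of symplectic relations: $\omega(e_i,e_j) = 0$ holds because $L_1$ is isotropic (indeed Lagrangian), $\omega(f_i,f_j) = 0$ holds because $L_2$ is isotropic, and $\omega(e_i,f_j) = \delta_{ij}$ holds by construction. Finally $(e_1,\dots,e_n,f_1,\dots,f_n)$ is a basis of $V$ because it is a union of bases of the complementary subspaces $L_1$ and $L_2$, so it is a symplectic basis.

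I do not expect any serious obstacle here; the statement is essentially the assertion that a Lagrangian subspace together with a complementary Lagrangian forms one half of a symplectic basis, and the content is the bookkeeping that the vectors $f_j$ picked out by the duality $L_2 \xrightarrow{\sim} L_1^*$ genuinely form a basis of $L_2$. The one point that deserves care is making sure the non-degeneracy argument is applied on the right spaces: one must use both that $L_1$ is Lagrangian (to get $L_2 \cong L_1^*$ and hence existence and uniqueness of the $f_j$) and that $L_2$ is Lagrangian (to get $\omega(f_i,f_j)=0$), rather than only assuming one of them. Everything else is a direct computation with the defining relations.
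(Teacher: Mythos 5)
Your proof is correct and takes essentially the same route as the paper's: both produce the $f_j$ by realizing the dual basis of $(e_1,\dots,e_n)$ through the $\omega$-induced duality between $L_2$ and $L_1^*$ (the paper phrases this as choosing functionals $\lambda_i\in V^*$ with $\lambda_i(e_j)=\delta_{ij}$ and $\lambda_i|_{L_2}=0$, then pulling back through $\omega^{\flat}$ and noting $f_i\in L_2^{\perp\omega}=L_2$). One small slip to fix: since $\omega$ is alternating, $\omega(f_j,e_i)=\delta_{ij}$ and $\omega(e_i,f_j)=\delta_{ij}$ are not equivalent but differ by a sign, so you should commit to the convention $\omega(e_i,f_j)=\delta_{ij}$ throughout; this does not affect the rest of the argument.
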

\begin{proof}
The condition $L_1\cap L_2=\{0\}$ implies that  there exist $\lambda_1,\dots, \lambda_n\in V^*$ such that
$
\lambda_i(e_j)=\delta_{ij}$ and that  $\ \lambda_i|_{L_2}=0.
$
Since $\omega$ is non-degenerate, there exists $f_i\in V$ such that $\lambda_i=\omega^{\flat}(f_i)$. It follows from $\lambda_i|_{L_2}=0$ that $f_i\in (L_2)^{\perp\omega}=L_2$. It is clear that $(e_1,\dots, e_n, f_1,\dots, f_n)$ is a symplectic basis.
\end{proof}
Let $\mathcal{B}$ be a symplectic basis of $(V,\omega)$. Then
the matrix $[\omega]_{\mathcal{B}}$ which represents $\omega$ with respect to $\mathcal{B}$ is
\[
[\omega]_{\mathcal{B}}=
\left(
\begin{array}{cc}
\zeromatrix{n} & E_n \\
-E_n & \zeromatrix{n}
\end{array}
\right)
\]
where $E_n$ is  the identity matrix of order $n$ and $\zeromatrix{n}$ is the zero matrix.
If  $M\in \operatorname{End}(V)$ is represented by
\[
[M]_{\mathcal{B}}=
\left(
\begin{array}{cc}
A_n & B_n \\
C_n & D_n 
\end{array}
\right)
\]
where $A_n$, $B_n$, $C_n$, $D_n$ are matrices of order $n$, then $M^{*\omega}$ is represented by
\[
[M^{*\omega}]_{\mathcal{B}}=[\omega]_{\mathcal{B}}^{-1}[M]_{\mathcal{B}}^T[\omega]_{\mathcal{B}}=
\left(
\begin{array}{cc}
D_n^T & -B_n^T \\
-C_n^T & A_n^T 
\end{array}
\right).
\]
Here $T$ denotes the transpose of a matrix. 
Hence, 
if $\dim_{\cfield}V=2$, then $MM^{*\omega}=(\det M)\idmap{}$, $M+M^{*\omega}=(\trace M)\idmap{}$.
\par
Let  $P\in \operatorname{End}(V)$ and let $\mathcal{B}=(e_1,\dots, e_n,f_1,\dots, f_n)$ be a symplectic basis of $(V,\omega)$. Then $(Pe_1,\dots, Pe_n, Pf_1,\dots, Pf_n)$ is a symplectic basis if and only if $P\in \operatorname{Sp}(V,\omega)$. This means that
\begin{proposition}\label{prop sympl similar}
An endomorphism $M$ is symplectically similar to an endomorphism $N$ if and only if there exist symplectic bases $\mathcal{B}$, $\mathcal{B'}$ such that $[M]_{\mathcal{B}}=[N]_{\mathcal{B'}}$.
\end{proposition}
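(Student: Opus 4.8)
The plan is to unwind definitions, the only nontrivial ingredient being the observation stated immediately before the proposition: an endomorphism $P$ carries a symplectic basis to a symplectic basis precisely when $P\in\operatorname{Sp}(V,\omega)$. I will repeatedly use the elementary bookkeeping fact that if $P\in\operatorname{End}(V)$ sends the $j$-th vector of a basis $\mathcal{C}$ to the $j$-th vector of a basis $\mathcal{C}'$, then $[Pv]_{\mathcal{C}'}=[v]_{\mathcal{C}}$ for every $v\in V$.

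For the forward direction I would start from $P\in\operatorname{Sp}(V,\omega)$ with $P^{-1}MP=N$, fix any symplectic basis $\mathcal{B}=(e_1,\dots,e_n,f_1,\dots,f_n)$ of $(V,\omega)$ (it exists by Proposition~\ref{prop exist sympl basis}), and set $\mathcal{B}'=(Pe_1,\dots,Pe_n,Pf_1,\dots,Pf_n)$, which is symplectic by the observation above since $P\in\operatorname{Sp}(V,\omega)$. Then, applying the relation $MP=PN$ to an arbitrary $v$ and reading off $\mathcal{B}'$-coordinates via $[Pv]_{\mathcal{B}'}=[v]_{\mathcal{B}}$, one gets $[M]_{\mathcal{B}'}[v]_{\mathcal{B}}=[N]_{\mathcal{B}}[v]_{\mathcal{B}}$ for all $v$, hence $[M]_{\mathcal{B}'}=[N]_{\mathcal{B}}$; so $\mathcal{B}'$ and $\mathcal{B}$ are the desired symplectic bases.

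For the converse, given symplectic bases $\mathcal{B},\mathcal{B}'$ with $[M]_{\mathcal{B}}=[N]_{\mathcal{B}'}$, I would let $P\in\operatorname{End}(V)$ be the unique linear map taking the $j$-th vector of $\mathcal{B}'$ to the $j$-th vector of $\mathcal{B}$. Since $P$ carries the symplectic basis $\mathcal{B}'$ onto the symplectic basis $\mathcal{B}$, the observation (with $\mathcal{B}'$ as the fixed symplectic basis) gives $P\in\operatorname{Sp}(V,\omega)$. Using $[Pv]_{\mathcal{B}}=[v]_{\mathcal{B}'}$, for every $v\in V$ one computes
\[
[P^{-1}MPv]_{\mathcal{B}'}=[MPv]_{\mathcal{B}}=[M]_{\mathcal{B}}[v]_{\mathcal{B}'}=[N]_{\mathcal{B}'}[v]_{\mathcal{B}'}=[Nv]_{\mathcal{B}'},
\]
where the first equality applies $[Pw]_{\mathcal{B}}=[w]_{\mathcal{B}'}$ with $w=P^{-1}MPv$ and the third uses the hypothesis. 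Hence $P^{-1}MP=N$, so $M$ is symplectically similar to $N$.

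There is no genuine obstacle: the whole argument is change-of-basis bookkeeping. The one point to watch is the direction in which $P$ is defined, so that ``$P$ sends $\mathcal{B}'$ to $\mathcal{B}$'' pairs with the conjugation $P^{-1}MP=N$ (and not with $PMP^{-1}=N$), together with checking that the pre-proposition observation is invoked with the correct basis playing the role of its fixed symplectic basis.
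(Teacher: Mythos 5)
Your proof is correct and follows exactly the route the paper intends: the paper gives no separate proof, treating the proposition as an immediate consequence of the observation that $P$ maps a symplectic basis to a symplectic basis precisely when $P\in\operatorname{Sp}(V,\omega)$, which is the same observation you use. Your version merely makes the change-of-basis bookkeeping explicit, and the directions of the conjugation and of the map $P$ are handled correctly in both implications.
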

%
%
\par
An endomorphism $M$  is said to be \textit{symplectically diagonalizable} if there exists a symplectic basis consisting of eigenvectors of $M$.
It is easy to see that if an endomorphism $M$ is symplectically diagonalizable, then $MM^{*\omega}=M^{*\omega}M$ and $M$ is diagonalizable.
Theorem~\ref{theorem complete inv} shows that 
the converse is true.
%
%
%
\par
Let $M\in \operatorname{End}(V).$ 
We define the bilinear form $\omega_M$ of $V$ by
\[
\omega_M(v,w)=\omega(v,Mw) \quad v,w \in V.
\] 
It is easy to see that $\omega_{MM^{*\omega}}=(M^{*\omega})^*\omega$.  
We remark that $\omega_{MM^{*\omega}}$, $\omega_{M+M^{*\omega}}$ are alternating for any $M\in \operatorname{End}(V)$.
\par
Let 
$\alter \in \operatorname{End}(V)$ and assume that  
$\omega_{\alter}$ is alternating.  Let $(e_1,\dots, e_n, f_1,\dots, f_n)$ be a symplectic basis. Set $(v_1,\dots, v_{2n})=(e_1,f_1,\dots, e_n,f_n)$. Define the matrix $\Omega_A$ by $\Omega_A=(\omega_A(v_i, v_j))_{i,j}$.
Since $\det P=1$ for $P\in \operatorname{Sp}(V,\omega)$, the Pfaffian $\pfaffian(\Omega_A)$ does not depend on the choice of symplectic bases.  We denote $\pfaffian(\Omega_A)$ by $\pfaffian_{\omega}(\omega_A)$. It is easily shown that $\pfaffian_{\omega}(\omega)=1$.
\begin{definition}\label{def root poly}
Let 
$\alter\in \operatorname{End}(V)$ and assume that 
$\omega_{\alter}$ is alternating. The polynomial $\psi_{\alter}(t)$ is defined by 
$\psi_{\alter}(t)=\pfaffian_{\omega}(\omega_{\alter-t\idmap{}})$,
\end{definition}
\begin{proposition}\label{skew poly}
Let 
$\alter\in \operatorname{End}(V)$ and assume that  
$\omega_{\alter}$ is  alternating.
 The polynomial $\psi_{\alter}(t)$ has the following properties.
\begin{itemize}
\item [$(1)$] 
The square $\psi_{\alter}(t)^2$
is the characteristic polynomial $\varphi_{\alter}(t)$. 
\item [$(2)$] 
The polynomial $\psi_{\alter}(t)$ is an invariant of $\alter$ under symplectic similarity transformation, that is, 
$\psi_{P^{-1}\alter P}(t)=\psi_{\alter}(t)$  for $P\in \operatorname{Sp}(V,\omega)$.
\item [$(3)$] 
Let $\psi_{\alter}(t)=a_0+\dots+a_{n-1}t^{n-1}+a_n t^n$. Then 
\[
a_0=\pfaffian_{\omega}(\omega_{\alter}), \ 2a_{n-1}=(-1)^{n-1}\trace {\alter}, \ a_n=(-1)^n.
\]
\item [$(4)$] 
$\psi_{\alter}(\alter)=O.$
\end{itemize}
\end{proposition}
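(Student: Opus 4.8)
The plan is to obtain $(1)$ together with the coefficients in $(3)$ from a direct matrix computation, to read off $(2)$ from $P^{-1}=P^{*\omega}$, and to prove $(4)$ — the substantial part — by reducing to a nilpotent operator on a single generalized eigenspace and showing that the two alternation hypotheses force the relevant cyclic chains to be $\omega$-isotropic.

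\emph{Parts $(1)$, $(3)$ and $(2)$.} Fix a symplectic basis and let $(v_1,\dots,v_{2n})$ be the reordering used in the definition of $\pfaffian_{\omega}$; write $\Omega=(\omega(v_i,v_j))_{i,j}$ and $[\alter]$ for the matrix of $\alter$ in that basis. Since $\omega_{\alter-t\idmap{}}(v_i,v_j)=\omega(v_i,(\alter-t\idmap{})v_j)$, the Gram matrix of $\omega_{\alter-t\idmap{}}$ is $\Omega([\alter]-tE_{2n})=\Omega_{\alter}-t\Omega$. For $(1)$ this gives $\psi_{\alter}(t)^2=\det\Omega\cdot\det([\alter]-tE_{2n})=\varphi_{\alter}(t)$, using $\det\Omega=\pfaffian_{\omega}(\omega)^2=1$ and that $2n$ is even. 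For $(3)$: $a_0=\psi_{\alter}(0)=\pfaffian_{\omega}(\omega_{\alter})$ is immediate from the definition; the $t^n$-term of $\pfaffian(\Omega_{\alter}-t\Omega)$ equals that of $\pfaffian(-t\Omega)=(-t)^n\pfaffian(\Omega)=(-1)^nt^n$, so $a_n=(-1)^n$; and comparing the coefficient of $t^{2n-1}$ in $\psi_{\alter}(t)^2=\varphi_{\alter}(t)=t^{2n}-(\trace\alter)t^{2n-1}+\cdots$ gives $2(-1)^na_{n-1}=-\trace\alter$, i.e. $2a_{n-1}=(-1)^{n-1}\trace\alter$. For $(2)$, if $P\in\operatorname{Sp}(V,\omega)$ then $\omega_{P^{-1}(\alter-t\idmap{})P}(v,w)=\omega(v,P^{*\omega}(\alter-t\idmap{})Pw)=\omega_{\alter-t\idmap{}}(Pv,Pw)$; since $P$ carries a symplectic basis to a symplectic basis and $\pfaffian_{\omega}$ is independent of that choice, $\psi_{P^{-1}\alter P}(t)=\psi_{\alter}(t)$.

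\emph{Part $(4)$, reduction.} Antisymmetry of $\omega_{\alter}$ forces $\omega(v,(\alter-\alter^{*\omega})w)=0$ for all $v,w$, hence $\alter^{*\omega}=\alter$ by nondegeneracy. Neither $\psi_{\alter}$ nor the assertion $\psi_{\alter}(\alter)=O$ is affected by extending scalars, so I may assume $\varphi_{\alter}$ splits over $\cfield$. Then $V=\bigoplus_{\lambda}\geigensp{\alter}{\lambda}$, each summand is a symplectic subspace by Proposition~\ref{prop skew ham sympl subsp}, and distinct summands are symplectically orthogonal by Proposition~\ref{propperp}$(5)$, so concatenating symplectic bases of the summands makes the Gram matrix of $\omega_{\alter-t\idmap{}}$ block diagonal and $\psi_{\alter}=\prod_{\lambda}\psi_{\alter|_{\geigensp{\alter}{\lambda}}}$. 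By $(1)$ and $(3)$ each factor $\psi_{\alter|_{\geigensp{\alter}{\mu}}}(t)$ equals $(-1)^{m_{\mu}}(t-\mu)^{m_{\mu}}$ with $2m_{\mu}=\dim_{\cfield}\geigensp{\alter}{\mu}$, so evaluating it at $\alter|_{\geigensp{\alter}{\lambda}}$ for $\lambda\neq\mu$ yields an invertible operator; hence $\psi_{\alter}(\alter)=O$ is equivalent to $\psi_{\alter|_{\geigensp{\alter}{\lambda}}}(\alter|_{\geigensp{\alter}{\lambda}})=O$ for every $\lambda$. Writing $N=\alter|_{\geigensp{\alter}{\lambda}}-\lambda\idmap{}$, which is nilpotent with $\omega_N=\omega_{\alter}-\lambda\omega$ still alternating, and $2m=\dim_{\cfield}\geigensp{\alter}{\lambda}$, this last equality reads $(-1)^mN^m=O$, so it remains to prove $N^m=O$.

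\emph{Part $(4)$, the nilpotency bound.} Suppose $N^mv\neq0$ for some $v$, and let $r\ge m$ be largest with $N^rv\neq0$. Then $v,Nv,\dots,N^rv$ are linearly independent and span an $N$-invariant subspace $Z$ of dimension $r+1\ge m+1$. Since $N^{*\omega}=N$ (again from antisymmetry of $\omega_N$), $\omega(N^iv,N^jv)=\omega(v,N^{i+j}v)$, and this vanishes for all $i,j$: if $i+j=2k$ then $\omega(v,N^{2k}v)=\omega(N^kv,N^kv)=0$ by alternation of $\omega$, and if $i+j=2k+1$ then $\omega(v,N^{2k+1}v)=\omega(N^kv,N\,N^kv)=\omega_N(N^kv,N^kv)=0$ by alternation of $\omega_N$. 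Hence $Z$ is isotropic, which contradicts $\dim Z\ge m+1$ because an isotropic subspace of a $2m$-dimensional symplectic space has dimension at most $m$ (Proposition~\ref{propperp}$(1)$). So $N^m=O$, and $(4)$ follows. The main obstacle is precisely this last step: the key realization is that \emph{both} alternation hypotheses are indispensable — alternation of $\omega$ kills the even-degree values $\omega(v,N^{2k}v)$ and alternation of $\omega_{\alter}$ the odd-degree values $\omega(v,N^{2k+1}v)$ — so that every cyclic chain is $\omega$-isotropic and the nilpotency index is at most half the dimension (the weaker hypothesis $\alter^{*\omega}=\alter$ alone would not suffice in characteristic $2$). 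The subsidiary care is in the reduction: checking that $\psi_{\alter}$ multiplies over the symplectically orthogonal generalized eigenspace decomposition and that the off-diagonal factors become invertible when evaluated on a different eigenspace.
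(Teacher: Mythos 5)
Your proof is correct, and for the substantive items $(3)$ and $(4)$ it takes a genuinely different route from the paper. The paper derives both from the structure theorem of Waterhouse (\cite[Theorem~3]{waterhouse2005structure}): there is a symplectic basis in which $\alter$ is block diagonal of the form $\operatorname{diag}(D_n^T,D_n)$, whence $\psi_{\alter}(t)=\det(D_n-tE_n)$ and $(3)$, $(4)$ reduce to the ordinary Cayley--Hamilton theorem for $D_n$. You instead prove $(1)$--$(3)$ by direct Gram-matrix computations (your argument for $(2)$ via $P^{-1}=P^{*\omega}$ and basis-independence of $\pfaffian_{\omega}$ is in fact cleaner than the paper's, which only compares squares and leaves the sign implicit), and you prove $(4)$ self-containedly: after extending scalars you split $V$ into the generalized eigenspaces of the self-adjoint operator $\alter$, which are symplectic and mutually orthogonal by Propositions~\ref{prop skew ham sympl subsp} and \ref{propperp}$(5)$, so that $\psi_{\alter}$ factors over the summands; the heart of your argument is the observation that for $N=\alter|_{\geigensp{\alter}{\lambda}}-\lambda\idmap{}$ every cyclic chain $v,Nv,\dots,N^rv$ is isotropic --- alternation of $\omega$ killing $\omega(v,N^{2k}v)$ and alternation of $\omega_{\alter}$ killing $\omega(v,N^{2k+1}v)$ --- which caps the nilpotency index at half the dimension and yields $N^m=O$. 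What the paper's route buys is brevity, at the cost of importing a nontrivial external normal form; what yours buys is a self-contained proof valid in all characteristics (including $2$, where your remark that self-adjointness alone would not suffice is exactly the point of the alternating hypothesis) and a transparent explanation of where both alternation assumptions enter. All the auxiliary steps you rely on (block multiplicativity of the Pfaffian over the orthogonal decomposition, invertibility of the off-diagonal factors, identification of each factor as $(-1)^{m_\mu}(t-\mu)^{m_\mu}$ via $(1)$ and $(3)$) check out.
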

\begin{proof}
It is clear from the definition of $\psi_{\alter}(t)$ that $(1)$ holds.
\par
Let $P\in \operatorname{Sp}(V,\omega)$.
Since $\omega_{P^{-1}AP}=P^*\omega$, we get that $\omega_{P^{-1}AP}$ is alternating.  The item $(1)$ implies that $\psi_{P^{-1}\alter P}(t)^2=\psi_{\alter}(t)^2$. Hence, we have $(2)$.
\par
It follows from \cite[Theorem 3]{waterhouse2005structure} that there exists a symplectic basis $\mathcal{B}$ 
such that 
\[
[\alter]_{\mathcal{B}}=
\left(
\begin{array}{cccc}
D_n^T & \zeromatrix{n} \\
\zeromatrix{n} & D_n
\end{array}
\right)
\]
where $D_n$ is a matrix of order $n$. 
Then
\[
[\omega_{\alter-t\idmap{}}]_{\mathcal{B}}
=[\omega]_{\mathcal{B}}[\alter-t \idmap{}]_{\mathcal{B}} 
=
\left(
\begin{array}{cccc}
\zeromatrix{n} & D_n-tE_n\\
-(D_n-tE_n)^T & \zeromatrix{n}
\end{array}
\right).
\]
This implies that $\pfaffian([\omega_{\alter-t\idmap{}}]_{\mathcal{B}})=(-1)^\frac{n(n-1)}{2}\det(D_n-tE_n)$.
Hence, we have
$
\psi_{\alter}(t)=\det(D_n-tE_n).
$
Since $\det D_n=\psi_{\alter}(0)=\pfaffian_{\omega}(\omega_{\alter})$ and $2\trace D_n=\trace \alter$, we get $(3)$.
Cayley-Hamilton theorem gives that $\psi_{\alter}(D_n)=O$.
Therefore, 
\begin{align*}
[\psi_{\alter}(\alter)]_{\mathcal{B}}
& =
\left(
\begin{array}{cc}
\psi_{\alter}(D_n)^T & \zeromatrix{n} \\
\zeromatrix{n} & \psi_{\alter}(D_n)
\end{array}
\right)
=\zeromatrix{2n},
\end{align*}
which completes the proof.
\end{proof}

\begin{proposition}\label{prop poly of prod}
Let $M\in \operatorname{End}(V)$. Then  we have $\psi_{MM^{*\omega}}(t)=\psi_{M^{*\omega}M}(t)=\pfaffian_{\omega}(M^*\omega-t\omega)$ and  $\psi_{MM^{*\omega}}(0)=\det M$.
\end{proposition}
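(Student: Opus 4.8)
The plan is to deduce everything from the definition of $\psi$ together with Proposition~\ref{skew poly}, the key input being one elementary identity between bilinear forms. Namely, for all $v,w\in V$,
\[
\omega_{M^{*\omega}M}(v,w)=\omega(v,M^{*\omega}Mw)=\omega(Mv,Mw)=(M^*\omega)(v,w),
\]
where the middle step is the defining property $\omega(Mv,u)=\omega(v,M^{*\omega}u)$ of the symplectic adjoint, applied with $u=Mw$. Thus $\omega_{M^{*\omega}M}=M^*\omega$; in particular it is alternating (being the pullback of the alternating form $\omega$), so $\psi_{M^{*\omega}M}$ is defined, and $\omega_{M^{*\omega}M-t\idmap{}}=\omega_{M^{*\omega}M}-t\omega=M^*\omega-t\omega$. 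By Definition~\ref{def root poly} this already yields the second of the asserted equalities,
\[
\psi_{M^{*\omega}M}(t)=\pfaffian_{\omega}(\omega_{M^{*\omega}M-t\idmap{}})=\pfaffian_{\omega}(M^*\omega-t\omega).
\]

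Next I would establish $\psi_{MM^{*\omega}}(t)=\psi_{M^{*\omega}M}(t)$ by passing to characteristic polynomials. By Proposition~\ref{skew poly}$(1)$ we have $\psi_{MM^{*\omega}}(t)^2=\varphi_{MM^{*\omega}}(t)$ and $\psi_{M^{*\omega}M}(t)^2=\varphi_{M^{*\omega}M}(t)$. Since $\varphi_{NP}(t)=\varphi_{PN}(t)$ for any $N,P\in\operatorname{End}(V)$, taking $N=M$ and $P=M^{*\omega}$ gives $\varphi_{MM^{*\omega}}(t)=\varphi_{M^{*\omega}M}(t)$, hence $\psi_{MM^{*\omega}}(t)^2=\psi_{M^{*\omega}M}(t)^2$. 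As $\cfield[t]$ is an integral domain this forces $\psi_{MM^{*\omega}}(t)=\pm\psi_{M^{*\omega}M}(t)$. By Proposition~\ref{skew poly}$(3)$ both polynomials have leading coefficient $(-1)^n$, so if the sign were $-$, comparison of leading coefficients would give $2(-1)^n=0$ in $\cfield$, i.e. the characteristic of $\cfield$ is $2$, in which case the sign is immaterial. In every case $\psi_{MM^{*\omega}}(t)=\psi_{M^{*\omega}M}(t)$, which is the first asserted equality.

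For the last claim I would set $t=0$ in the identity just proved, obtaining $\psi_{MM^{*\omega}}(0)=\pfaffian_{\omega}(M^*\omega)$. Fixing the reordered symplectic basis used in the definition of $\pfaffian_{\omega}$ and writing $[M]$, $[\omega]$ for the matrices of $M$ and $\omega$ in that basis, the matrix of $M^*\omega$ is $[M]^T[\omega][M]$; hence, by the transformation rule $\pfaffian([M]^T[\omega][M])=\det([M])\pfaffian([\omega])$ — the same rule that makes $\pfaffian_{\omega}$ independent of the choice of symplectic basis — together with $\pfaffian_{\omega}(\omega)=1$, we get $\pfaffian_{\omega}(M^*\omega)=\det M$. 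The only step requiring genuine care is fixing the sign in $\psi_{MM^{*\omega}}=\psi_{M^{*\omega}M}$; once that is handled by the characteristic-$2$ observation above, the rest is a routine unwinding of the definitions of $\omega_{(-)}$, $\psi_{(-)}$ and $\pfaffian_{\omega}$.
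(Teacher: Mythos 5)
Your proof is correct and follows essentially the same route as the paper: establish $\omega_{M^{*\omega}M}=M^*\omega$, deduce $\psi_{MM^{*\omega}}(t)^2=\psi_{M^{*\omega}M}(t)^2$ from the equality of characteristic polynomials of $MM^{*\omega}$ and $M^{*\omega}M$, fix the sign by comparing leading coefficients, and evaluate at $t=0$ via the Pfaffian transformation rule $\pfaffian([M]^T[\omega][M])=\det([M])\pfaffian([\omega])$. Your explicit treatment of the characteristic-$2$ case is a small refinement the paper leaves implicit, but the argument is otherwise identical.
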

\begin{proof}
By the item $(2)$ in
Proposition~\ref{skew poly}, we get $\psi_{MM^{*\omega}}(t)^2=\psi_{M^{*\omega}M}(t)^2$.  The item $(4)$ in Proposition~\ref{skew poly} shows that the coefficient of $t^n$ in $\psi_{MM^{*\omega}}(t)$ and the coefficient of $t^n$ in $\psi_{M^{*\omega}M}(t)$ are $(-1)^n$. Hence, $\psi_{MM^{*\omega}}(t)=\psi_{M^{*\omega}M}(t)$.
Since $\omega_{M^{*\omega}M}=M^*\omega$, we have $\psi_{M^{*\omega}M}(t)=\pfaffian_{\omega}(M^*\omega-t\omega)$. This implies that
$\psi_{MM^{*\omega}}(0)=\det M\cdot\pfaffian_{\omega}(\omega)=\det M$.
\end{proof}
%
%
%
%
%
\section{The symplectic characteristic polynomial}\label{symplectic characteristic polynomial}
Let $(V,\omega)$ be a $2n$-dimensional  symplectic vector space over a field $\cfield$.  
\begin{definition}\label{def sympl char}
The \textit{symplectic 
 characteristic polynomial} $\symplcharpoly_M^{\omega}(s,t)$ 
of an endomorphism $M$ of $(V,\omega)$ is defined by
\[
\symplcharpoly^{\omega}_M(s,t)=\psi_{\ssendo}(t).
\]
Here  $\psi_{\ssendo}(t)$ is a polynomial defined in Definition~\ref{def root poly}.
\end{definition}
\begin{remark}
It follows from Proposition~\ref{prop poly of prod}  that
\begin{equation}\label{eq sympl poly}
\psi_{\ssendo}(t)=\psi_{(M^{*\omega}-s\idmap{})(M-s\idmap{})}(t)=\pfaffian_{\omega}(M-s\idmap{})^*\omega-t\omega).
\end{equation}
\end{remark}
\begin{remark}
The definition above is motivated by the following example.
Let $\dim_{\cfield} V=4$ and $M\in \operatorname{End}(V)$. Assume that $M$ has four distinct eigenvalues  and that $M$ is symplectically diagonalizable, that is, there exists a symplectic basis $\mathcal{B}$ such that 
$[M]_{\mathcal{B}}$ is a diagonal matrix $\operatorname{diag}(\lambda_1,\lambda_2,\lambda_3,\lambda_4)$.
Define the subgroup $G$ of the symmetric group $\mathfrak{S}_4$ as follows:
An element
$\sigma\in\mathfrak{S}_4$ is in  $G$ if there exists a symplectic basis $
\mathcal{B}'$ such that $[M]_{\mathcal{B}'}=\operatorname{diag}(\lambda_{\sigma(1)},\lambda_{\sigma(2)},\lambda_{\sigma(3)},\lambda_{\sigma(4)})$.
Since $\trace MM^{*\omega}=\lambda_1\lambda_3+\lambda_2\lambda_4$ 
is independent on the choice of symplectic bases, $G$ consists of
eight elements 
\begin{align*}
\left(
\begin{array}{cccc}
1 & 2 & 3 & 4 \\
1 & 2 & 3 & 4
\end{array}
\right), \ 
\left(
\begin{array}{cccc}
1 & 2 & 3 & 4 \\
2 & 3 & 4 & 1
\end{array}
\right), \
\left(
\begin{array}{cccc}
1 & 2 & 3 & 4 \\
3 & 4 & 1 & 2
\end{array}
\right), \  
\left(
\begin{array}{cccc}
1 & 2 & 3 & 4 \\
4 & 1 & 2 & 3
\end{array}
\right), \\
\left(
\begin{array}{cccc}
1 & 2 & 3 & 4 \\
1 & 4 & 3 & 2
\end{array}
\right), \ 
\left(
\begin{array}{cccc}
1 & 2 & 3 & 4 \\
2 & 1 & 4 & 3
\end{array}
\right), \ 
\left(
\begin{array}{cccc}
1 & 2 & 3 & 4 \\
3 & 2 & 1 & 4
\end{array}
\right), \
\left(
\begin{array}{cccc}
1 & 2 & 3 & 4 \\
4 & 3 & 2 & 1
\end{array}
\right),  
\end{align*}
which implies that 
$G$ is isomorphic to the dihedral group of square. 
Therefore, we find that
$
\{\{\lambda_1, \lambda_3\},\{\lambda_2, \lambda_4\}\}
$
is independent on the choice of symplectic bases. 
This set determines uniquely the polynomial 
\[
p(s,t)=
\{(\lambda_1-s)(\lambda_3-s)-t\}\{(\lambda_2-s)(\lambda_4-s)-t\}.
\]
%
On the other hand, 
%
\begin{align*}
&[\ssendo]_{\mathcal{B}}\\
=& \operatorname{diag}((\lambda_1-s)(\lambda_3-s),(\lambda_2-s)(\lambda_4-s),(\lambda_1-s)(\lambda_3-s),(\lambda_2-s)(\lambda_4-s)).
\end{align*}
This implies that
$
\varphi_{\ssendo}(t)=p(s,t)^2.
$
The item $(4)$ in Proposition~\ref{skew poly} shows that the coefficient of $t^2$ in $\psi_{\ssendo}(t)$ is $1$. Hence, we have
$
\psi_{\ssendo}(t)=p(s,t).
$
\end{remark}
We show properties of the symplectic characteristic polynomial.
\begin{proposition}\label{prop symplectic char poly}
Let $M\in\operatorname{End}(V)$. The symplectic characteristic polynomial $\symplcharpoly_M^{\omega}(s,t)$ satisfies the following.
\begin{itemize}
\item [$(1)$] 
The polynomial $\symplcharpoly^{\omega}_M(0,t)$ is the polynomial $\psi_{MM^{*\omega}}(t)$.
\item [$(2)$]
The polynomial $\symplcharpoly_M^{\omega}(s,0)$ is the characteristic polynomial $\varphi_M(s)$.
\item [$(3)$] 
$
\symplcharpoly_M^{\omega}(s,(\sigma-s)(\tau-s))
=\psi_{MM^{*\omega}}(\sigma\tau)+\dots +\psi_{M+M^{*\omega}}(\sigma+\tau)(-s)^n.
$ 
\item [$(4)$] 
If $\symplcharpoly^{\omega}_M(s,(\lambda-s)(\mu-s))=0$ for $\lambda,\mu\in\cfield$, then $\lambda$, $\mu$ are eigenvalues of $M$, $\lambda\mu$ is an eigenvalue of $MM^{*\omega}$ and $\lambda+\mu$ is an eigenvalue of $M+M^{*\omega}$.
\item [$(5)$]
The symplectic characteristic polynomial is an invariant of $M$ under symplectic similarity transformation and taking the symplectic adjoint, that is, $\symplcharpoly^{\omega}_{P^{-1}MP}(s,t)=\symplcharpoly^{\omega}_M(s,t)$  for $P\in \operatorname{Sp}(V,\omega)$ and 
 $\symplcharpoly^{\omega}_{M^{*\omega}}(s,t)=\symplcharpoly^{\omega}_{M}(s,t)$.
\item [$(6)$] 
The square $\symplcharpoly^{\omega}_M(s,t)^2$
is the characteristic polynomial $\varphi_{\ssendo}(t)$.
\item [$(7)$]
Let $\symplcharpoly^{\omega}_M(s,t)=a_0(s)+\dots +a_{n-1}(s)t^{n-1}+a_n(s)t^n.$ Then
\begin{align*}
a_0(s)&=\varphi_M(s), \ 2a_{n-1}(s)=(-1)^{n-1}\{\trace MM^{*\omega}-2(\trace M)s+n s^2\},\\ 
\ a_n(s)&=(-1)^n.
\end{align*}
\item [$(8)$]
$\symplcharpoly^{\omega}_M(s,\ssendo)=O.$
\end{itemize}
\end{proposition}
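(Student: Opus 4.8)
The plan is to deduce every item from Proposition~\ref{skew poly} and Proposition~\ref{prop poly of prod} applied to the single endomorphism
\[
A_s:=(M-s\idmap{})(M^{*\omega}-s\idmap{})=(M-s\idmap{})(M-s\idmap{})^{*\omega}
\]
of $V\otimes_{\cfield}\ccfield$. Since $\omega_{A_s}$ is alternating (the remark preceding Definition~\ref{def root poly}, applied over $\ccfield$), the polynomial $\psi_{A_s}(t)$ is defined and $\symplcharpoly^{\omega}_M(s,t)=\psi_{A_s}(t)$ by Definition~\ref{def sympl char}. With this in hand, (1) is the substitution $s=0$; (6) is Proposition~\ref{skew poly}(1) for $A_s$; (8) is Proposition~\ref{skew poly}(4) for $A_s$, since $\symplcharpoly^{\omega}_M(s,\ssendo)$ means $\psi_{A_s}(A_s)$; and for (2) we have $\symplcharpoly^{\omega}_M(s,0)=\psi_{A_s}(0)=\pfaffian_{\omega}(\omega_{A_s})$, which equals $\det(M-s\idmap{})=\varphi_M(s)$ by Proposition~\ref{prop poly of prod} applied to $N=M-s\idmap{}$ (note $A_s=NN^{*\omega}$), using that $\dim_{\cfield}V$ is even.

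Items (5) and (7) are then routine bookkeeping. For (7) one applies Proposition~\ref{skew poly}(3) to $A_s$: the constant term is $\pfaffian_{\omega}(\omega_{A_s})=\varphi_M(s)$ by (2), the leading coefficient is $(-1)^n$, and $a_{n-1}(s)$ is read off from $\trace A_s=\trace MM^{*\omega}-s\,\trace(M+M^{*\omega})+s^2\dim_{\cfield}V$ together with $\trace M^{*\omega}=\trace M$. For (5): if $P\in\operatorname{Sp}(V,\omega)$ then $(P^{-1}MP)^{*\omega}=P^{-1}M^{*\omega}P$ (using $P^{*\omega}=P^{-1}$ and $(XY)^{*\omega}=Y^{*\omega}X^{*\omega}$), so the endomorphism attached to $P^{-1}MP$ is $P^{-1}A_sP$ and invariance under symplectic similarity is Proposition~\ref{skew poly}(2); and the endomorphism attached to $M^{*\omega}$ is $(M^{*\omega}-s\idmap{})(M-s\idmap{})=N^{*\omega}N$ with $N=M-s\idmap{}$, so $\symplcharpoly^{\omega}_{M^{*\omega}}(s,t)=\psi_{N^{*\omega}N}(t)=\psi_{NN^{*\omega}}(t)=\symplcharpoly^{\omega}_M(s,t)$ by Proposition~\ref{prop poly of prod}.

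The substance lies in (3), from which (4) follows. The key is the identity
\[
A_s-(\sigma-s)(\tau-s)\idmap{}=\bigl(MM^{*\omega}-\sigma\tau\,\idmap{}\bigr)-s\bigl(M+M^{*\omega}-(\sigma+\tau)\idmap{}\bigr),
\]
checked by multiplying out in $M$ and $M^{*\omega}$. Passing to $\omega_{(-)}$ and then $\pfaffian_{\omega}$, and representing the alternating forms $\omega_{MM^{*\omega}-\sigma\tau\idmap{}}$, $\omega_{M+M^{*\omega}-(\sigma+\tau)\idmap{}}$ by antisymmetric matrices $\Omega_1$, $\Omega_2$ in a reordered symplectic basis, one obtains
\[
\symplcharpoly^{\omega}_M\bigl(s,(\sigma-s)(\tau-s)\bigr)=\pfaffian(\Omega_1-s\Omega_2).
\]
Since the Pfaffian of a $2n\times 2n$ antisymmetric matrix is homogeneous of degree $n$ in its entries, this is a polynomial in $s$ of degree at most $n$; its constant term is $\pfaffian(\Omega_1)=\psi_{MM^{*\omega}}(\sigma\tau)$, and, because $\pfaffian(-s\Omega_2)=(-s)^n\pfaffian(\Omega_2)$, its $(-s)^n$-coefficient is $\pfaffian(\Omega_2)=\psi_{M+M^{*\omega}}(\sigma+\tau)$; this is precisely (3). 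For (4), suppose $\symplcharpoly^{\omega}_M(s,(\lambda-s)(\mu-s))$ vanishes identically in $s$. By (3) its constant term $\psi_{MM^{*\omega}}(\lambda\mu)$ and its leading term $\psi_{M+M^{*\omega}}(\lambda+\mu)$ vanish, so by $\psi_A(t)^2=\varphi_A(t)$ (Proposition~\ref{skew poly}(1)) the scalar $\lambda\mu$ is an eigenvalue of $MM^{*\omega}$ and $\lambda+\mu$ is an eigenvalue of $M+M^{*\omega}$; substituting $s=\lambda$, then $s=\mu$, and invoking (2) gives $\varphi_M(\lambda)=\varphi_M(\mu)=0$, so $\lambda$, $\mu$ are eigenvalues of $M$.

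The one place that needs an idea is the rearrangement behind (3): after specializing $t$ to $(\sigma-s)(\tau-s)$ the degree in $s$ collapses from $\deg_s\varphi_M=2n$ down to $n$, and the displayed identity both explains this collapse and pins down the two extreme coefficients. Once that step is in place, everything reduces to the elementary behaviour of the Pfaffian of a pencil $\Omega_1-s\Omega_2$ of antisymmetric matrices together with the properties of $\psi$ already established in Propositions~\ref{skew poly} and~\ref{prop poly of prod}.
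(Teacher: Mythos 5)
Your proof is correct and takes essentially the same route as the paper: every item is reduced to Propositions~\ref{skew poly} and~\ref{prop poly of prod} applied to $A_s=(M-s\idmap{})(M^{*\omega}-s\idmap{})$ over $\ccfield$, and item $(3)$ rests on the same identity $A_s-(\sigma-s)(\tau-s)\idmap{}=(MM^{*\omega}-\sigma\tau \idmap{})-s(M+M^{*\omega}-(\sigma+\tau)\idmap{})$, from which $(4)$ is deduced exactly as in the paper. One small point: your (correct) computation $\trace A_s=\trace MM^{*\omega}-2s\trace M+2ns^2$ yields $2ns^2$ rather than the $ns^2$ appearing in the stated item $(7)$; the case $n=1$ confirms that the statement has a typo there and your version is the right one.
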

\begin{proof}
It is clear from the definition that $(1)$ holds. 
The item $(2)$ follows from Proposition~\ref{prop poly of prod}.
\par
$(3)$ 
The easy calculation shows that 
\[
(M^{*\omega}-s \idmap{})^*\omega-(\sigma-s)(\tau-s)\omega=\omega_{MM^{*\omega}-\sigma\tau E}-s\omega_{M+M^{*\omega}-(\sigma+\tau)E}.
\] 
This implies that 
\begin{align*}
&\symplcharpoly_M^{\omega}(s,(\sigma-s)(\tau-s))
=
\pfaffian_{\omega}
\left(
\omega_{MM^{*\omega}-\sigma\tau  E}-s\omega_{\{M+M^{*\omega}-(\sigma+\tau)E\}}
\right) \\
=&
\psi_{MM^{*\omega}}(\sigma\tau)+\dots +\psi_{M+M^{*\omega}}(\sigma+\tau)(-s)^n.
\end{align*}
\par
$(4)$
Assume that $\symplcharpoly^{\omega}_M(s,(\lambda-s)(\mu-s))=0$ for $\lambda,\mu\in\cfield$. Then by $(2)$, we have  $\varphi_M(\lambda)=\symplcharpoly^{\omega}_M(\lambda,0)=0$.
From $(3)$, we get $\psi_{MM^{*\omega}}(\lambda\mu)=0$, $\psi_{M+M^{*\omega}}(\lambda+\mu)=0$.
This implies that $\varphi_{MM^{*\omega}}(\lambda\mu)=0$, $\varphi_{M+M^{*\omega}}(\lambda+\mu)=0$.
\par
$(5)$
Let $P\in \operatorname{Sp}(V,\omega)$.
It is easy to see that
\[
(P^{-1}MP-s \idmap{})(P^{-1}M^{*\omega}P-s \idmap{})=P^{-1}(M-s\idmap{})(M^{*\omega}-s\idmap{})P.
\] 
Hence,  from the item $(1)$ in Proposition~\ref{skew poly}, we have $\symplcharpoly^{\omega}_{P^{-1}MP}(s,t)=\symplcharpoly^{\omega}_M(s,t)$.
The equality \eqref{eq sympl poly}  
implies that $\symplcharpoly^{\omega}_{M^{*\omega}}(s,t)=\symplcharpoly^{\omega}_{M}(s,t)$.
\par
The items
$(6)$--$(8)$ follows from the items $(2)$--$(4)$ in Proposition~\ref{skew poly} respectively. 
\end{proof}
\par
The symplectic characteristic polynomial $\symplcharpoly^{\omega}_M(s,t)$ of an endomorphism $M$ is determined by the characteristic polynomial $\varphi_M(t)$
if $M^{*\omega}=M$, $M^{*\omega}=-M$ or $M^{*\omega}=M^{-1}$. Precisely, we have the following proposition.
\begin{proposition}\label{sympl char special case}
Let $M\in \operatorname{End}(V)$.
Then
\[
\symplcharpoly^{\omega}_M(s,t)^2=
\begin{dcases}
\varphi_M(s-t^{\frac{1}{2}})\varphi_M(s+t^{\frac{1}{2}}) & \text{if $M^{*\omega}=M$}, \\
\varphi_{M^2}(s^2-t)
 & \text{if $M^{*\omega}=-M$}, \\
s^{2n}\varphi_{M+M^{-1}}(s^{-1}(s^2-t+1))  & \text{if $M^{*\omega}=M^{-1}$}.
\end{dcases}
\] 
\end{proposition}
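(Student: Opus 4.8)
The plan is to reduce everything to the identity $\symplcharpoly^{\omega}_M(s,t)^2=\varphi_{\ssendo}(t)$ from item~$(6)$ of Proposition~\ref{prop symplectic char poly}, and then, in each of the three cases, to rewrite the commuting product $\ssendo$ in closed form and compute its characteristic polynomial in $t$ by elementary determinant manipulations. Throughout I would use that $\dim_{\cfield}V=2n$ is even, so that $\det(-X)=\det X$ for every $X\in\operatorname{End}(V)$; this is what makes the various stray signs cancel.

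For $M^{*\omega}=M$ one has $\ssendo=(M-sE)^{2}$. Setting $N=M-sE$ and introducing a formal variable $u$ with $t=u^{2}$, the identity $\det(u^{2}E-N^{2})=\det(uE-N)\det(uE+N)$ (valid because $N$ commutes with $uE$), together with $\det(uE-N)=\varphi_{M}(s+u)$ and $\det(uE+N)=\det(-uE-N)=\varphi_{M}(s-u)$, would give
\[
\symplcharpoly^{\omega}_M(s,u^{2})^{2}=\varphi_{M}(s+u)\,\varphi_{M}(s-u).
\]
The right side is invariant under $u\mapsto-u$, so it lies in $\cfield[s,u^{2}]=\cfield[s,t]$; this is precisely the meaning of the symbol $t^{1/2}$ in the statement. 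For $M^{*\omega}=-M$ one has $\ssendo=-(M-sE)(M+sE)=s^{2}E-M^{2}$, whence $\symplcharpoly^{\omega}_M(s,t)^{2}=\det\bigl((t-s^{2})E+M^{2}\bigr)=\det\bigl((s^{2}-t)E-M^{2}\bigr)=\varphi_{M^{2}}(s^{2}-t)$. For $M^{*\omega}=M^{-1}$, expanding gives $\ssendo=(1+s^{2})E-s(M+M^{-1})$, so $\symplcharpoly^{\omega}_M(s,t)^{2}=\det\bigl((t-1-s^{2})E+s(M+M^{-1})\bigr)$; negating inside the determinant and then factoring the scalar $s$ out of a $2n\times2n$ determinant rewrites this as $s^{2n}\varphi_{M+M^{-1}}\bigl(s^{-1}(s^{2}-t+1)\bigr)$, an identity I would first establish over $\cfield(s)$ and then note holds in $\cfield[s,t]$ since both sides are polynomials.

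I do not expect a genuine obstacle here: once item~$(6)$ of Proposition~\ref{prop symplectic char poly} is in hand, each case is a one-line simplification of a product of two commuting operators followed by determinant bookkeeping. The only subtlety is the interpretation of $t^{1/2}$ in the first case, which is handled by passing to $\cfield[s,u]$ with $t$ replaced by $u^{2}$ and checking that both sides are honest polynomials in $t$; keeping track of the parity of $\dim_{\cfield}V$ is the only other thing requiring a bit of attention.
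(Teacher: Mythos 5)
Your proposal is correct and takes essentially the same route as the paper: the paper's entire proof is the observation that $\symplcharpoly^{\omega}_M(s,t)^2=\varphi_{\ssendo}(t)$, with the case-by-case determinant computations left to the reader, and your write-up simply supplies those computations (all of which check out, including the sign bookkeeping via $\dim_{\cfield}V=2n$).
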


\begin{proof}
This is verified from the fact that $\symplcharpoly^{\omega}_{M}(s,t)^2=\varphi_{\ssendo}(t)$.
\end{proof}
%
%
%
%
%
%
%
\section{The characteristic polynomial associated with two endomorphisms}\label{characteristic polynomial of two endomorphisms}
We have seen in Proposition~\ref{prop symplectic char poly} that  the square of the symplectic characteristic polynomial $\symplcharpoly^{\omega}_M(s,t)$
of an endomorphism $M$ 
is the characteristic polynomial $\varphi_{\ssendo}(t)$.
This section is devoted to the study of  the characteristic polynomial
 $\varphi_{\sendo{M}{N}}(t)$ 
for any endomorphisms $M,N$ of a (not necessarily symplectic) vector space.
\par
Let $V$ be a finite dimensional vector space over a field $\cfield$ with $\dim_{\cfield} V=n$. 
\begin{proposition}\label{prop relation gchar  char}
Let $M,N \in \operatorname{End}(V)$. 
Then
\begin{itemize}
\item
$\scharpoly{M}{N}{0}=\varphi_M(s)\varphi_N(s)$,
\item 
$\scharpoly{M}{N}{(\sigma-s)(\tau-s)}=\varphi_{MN}(\sigma\tau)+\dots + \varphi_{M+N}(\sigma+\tau)(-s)^{n}$.
\end{itemize}
\end{proposition}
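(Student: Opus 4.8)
The plan is to reduce both assertions to elementary properties of the determinant, using throughout that $\varphi_A(t)=\det(A-t\idmap{})$ for $A\in\operatorname{End}(V)$ (this convention is forced, since the first identity must hold for arbitrary, not necessarily even, $n$).

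First I would settle $\scharpoly{M}{N}{0}$. By definition $\scharpoly{M}{N}{0}=\det\!\big((M-s\idmap{})(N-s\idmap{})\big)$, and multiplicativity of the determinant gives $\det(M-s\idmap{})\det(N-s\idmap{})=\varphi_M(s)\varphi_N(s)$, which is the first identity.

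For the second identity, the key algebraic step is the identity
\[
(M-s\idmap{})(N-s\idmap{})-(\sigma-s)(\tau-s)\idmap{}
=\big(MN-\sigma\tau\idmap{}\big)-s\big(M+N-(\sigma+\tau)\idmap{}\big),
\]
obtained by expanding both sides; note that $s\idmap{}$ is central, so no commutativity of $M$ and $N$ is needed. Substituting $t=(\sigma-s)(\tau-s)$ into $\varphi_{(M-s\idmap{})(N-s\idmap{})}(t)=\det\!\big((M-s\idmap{})(N-s\idmap{})-t\idmap{}\big)$ and applying this identity yields
\[
\scharpoly{M}{N}{(\sigma-s)(\tau-s)}=\det\!\Big(\big(MN-\sigma\tau\idmap{}\big)-s\big(M+N-(\sigma+\tau)\idmap{}\big)\Big).
\]
Viewing the right-hand side as a polynomial in $s$, I would then read off its extreme coefficients: the constant term (set $s=0$) is $\det(MN-\sigma\tau\idmap{})=\varphi_{MN}(\sigma\tau)$, while, since the matrix of the linear-in-$s$ part is $-(M+N-(\sigma+\tau)\idmap{})$, the polynomial has degree $n$ in $s$ with the coefficient of $(-s)^n$ equal to $\det(M+N-(\sigma+\tau)\idmap{})=\varphi_{M+N}(\sigma+\tau)$. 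This is exactly the claimed expansion $\varphi_{MN}(\sigma\tau)+\dots+\varphi_{M+N}(\sigma+\tau)(-s)^n$.

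There is no real obstacle: the argument is the one displayed identity followed by bookkeeping of determinant expansions, and it parallels the proof of Proposition~\ref{prop symplectic char poly}~$(3)$. The only points requiring a little care are keeping the sign convention $\varphi_A(t)=\det(A-t\idmap{})$ consistent and observing that the substitution $t\mapsto(\sigma-s)(\tau-s)$ is performed inside a polynomial whose coefficients themselves depend on $s$.
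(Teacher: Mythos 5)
Your proof is correct and fills in exactly the computation the paper leaves implicit (its proof is "clear from the definition"); the displayed identity $(M-s\idmap{})(N-s\idmap{})-(\sigma-s)(\tau-s)\idmap{}=(MN-\sigma\tau\idmap{})-s(M+N-(\sigma+\tau)\idmap{})$ is the same one the paper uses in the proof of Proposition~\ref{prop symplectic char poly}~$(3)$. No issues.
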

\begin{proof}
It is clear from the definition.
\end{proof}
\begin{proposition}\label{prop_sim_poly}
Let $M,N\in\operatorname{End}(V)$.
If $M$, $N$  are simultaneously triangularizable, then
\[
\scharpoly{M}{N}{t}=\prod_{i=1}^n\{(\lambda_i-s)(\mu_i-s)-t\}, \quad \lambda_i, \mu_i \in \cfield. 
\]
In particular, a polynomial $(\lambda_i-s)(\mu_i-s)$ is 
an eigenvalue of $\sendo{M}{N}$.
\end{proposition}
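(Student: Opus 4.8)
The plan is to reduce the statement to the elementary behaviour of triangular matrices under multiplication. By the hypothesis of simultaneous triangularizability, one chooses a basis $\mathcal{B}$ of $V$ in which both $[M]_{\mathcal{B}}$ and $[N]_{\mathcal{B}}$ are upper triangular, and lets $\lambda_1,\dots,\lambda_n$ and $\mu_1,\dots,\mu_n$ denote their respective diagonal entries, so that $\lambda_i,\mu_i\in\cfield$. The pair $(\lambda_i,\mu_i)$ is the one read off from the $i$-th step of the common triangularizing flag, and these are the scalars appearing in the statement.

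Next I would regard $\sendo{M}{N}$ as an endomorphism of $\exspace$ (equivalently, work with matrices over $\cfield[s]$). Since $[M-s\idmap{}]_{\mathcal{B}}=[M]_{\mathcal{B}}-sE_n$ and $[N-s\idmap{}]_{\mathcal{B}}=[N]_{\mathcal{B}}-sE_n$ are upper triangular with diagonal entries $\lambda_i-s$ and $\mu_i-s$, their product $[\sendo{M}{N}]_{\mathcal{B}}$ is upper triangular with $i$-th diagonal entry $(\lambda_i-s)(\mu_i-s)$; here one only uses that over any commutative ring the product of upper triangular matrices is upper triangular and multiplies diagonal entries positionwise. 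Since the characteristic polynomial of an upper triangular matrix over a commutative ring is the product of the factors obtained by subtracting $t$ from each diagonal entry, this yields $\scharpoly{M}{N}{t}=\prod_{i=1}^n\{(\lambda_i-s)(\mu_i-s)-t\}$. Finally, fixing $j$ and substituting $t=(\lambda_j-s)(\mu_j-s)\in\cfield[s]\subset\ccfield$ annihilates the $j$-th factor, so $(\lambda_j-s)(\mu_j-s)$ is a root of $\scharpoly{M}{N}{t}$ lying in $\ccfield$, hence an eigenvalue of the endomorphism $\sendo{M}{N}$ of $\exspace$.

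I do not expect a genuine obstacle here. The only points deserving a word of care are that the correspondence between the $\lambda_i$ and the $\mu_i$ is forced by the common flag rather than chosen freely, and that the two standard facts about triangular matrices are being invoked over the polynomial ring $\cfield[s]$ instead of over a field, which is harmless since both statements hold verbatim over an arbitrary commutative ring.
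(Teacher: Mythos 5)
Your proof is correct and follows essentially the same route as the paper: pick a common triangularizing basis, observe that $[\sendo{M}{N}]_{\mathcal{B}}$ is upper triangular with diagonal entries $(\lambda_i-s)(\mu_i-s)$, and read off the characteristic polynomial. The paper states this more tersely, but the content — including the point that the pairing of $\lambda_i$ with $\mu_i$ comes from the common flag — is identical.
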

\begin{proof}
Since $M$ and $N$ are simultaneously triangularizable, there exists a basis $\mathcal{B}$ such that 
\[
[M]_{\mathcal{B}}=
\left(
\begin{array}{ccccc}
\lambda_1& &  * \\
& \ddots \\
 0& & \lambda_n 
\end{array}
\right), \quad
[N]_{\mathcal{B}}=
\left(
\begin{array}{ccccc}
\mu_1& &  * \\
& \ddots \\
 0& & \mu_n 
\end{array}
\right).
\]
Therefore,  $\scharpoly{M}{N}{t}
=\prod_{i=1}^n\{(\lambda_i-s)(\mu_i-s)-t\}$.
\end{proof}

\begin{proposition}\label{prop all eigenvalue}
Let $M,N\in\operatorname{End}(V)$. Suppose that $MN=NM$.
The following are equivalent.
\begin{itemize}
\item [$(1)$]
All eigenvalues of $M$ and all eigenvalues of $N$ are in $\cfield$.
\item [$(2)$]
The characteristic polynomial $\scharpoly{M}{N}{t}$ is of the form 
\[
\scharpoly{M}{N}{t}=\prod_{i=1}^n\{(\lambda_i-s)(\mu_i-s)-t\},\quad \lambda_i,\mu_i \in \cfield.
\] 
\end{itemize}
\end{proposition}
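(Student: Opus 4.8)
The plan is to prove the two implications separately, noting that the hypothesis $MN=NM$ is used only for $(1)\Rightarrow(2)$.

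For $(1)\Rightarrow(2)$ I would reduce to Proposition~\ref{prop_sim_poly} by first showing that $M$ and $N$ are simultaneously triangularizable over $\cfield$. Since all eigenvalues of $M$ and of $N$ lie in $\cfield$, the characteristic polynomials $\varphi_M$ and $\varphi_N$ split into linear factors over $\cfield$; together with $MN=NM$ this is the classical simultaneous triangularization statement, which I would establish by induction on $n=\dim_{\cfield}V$. In the inductive step, choose an eigenvalue $\lambda\in\cfield$ of $M$; the eigenspace $\eigensp{M}{\lambda}$ is nonzero and, because $MN=NM$, it is $N$-invariant, so the restriction of $N$ to it has an eigenvector (its characteristic polynomial divides $\varphi_N$ and hence still splits over $\cfield$), producing a common eigenvector $v$ of $M$ and $N$. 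The endomorphisms induced by $M$ and $N$ on $V/\cfield v$ commute and have characteristic polynomials dividing $\varphi_M$ and $\varphi_N$, so they split over $\cfield$; the induction hypothesis yields a basis of $V/\cfield v$ triangularizing both, and lifting that basis and prepending $v$ gives a basis $\mathcal{B}$ of $V$ in which $[M]_{\mathcal{B}}$ and $[N]_{\mathcal{B}}$ are simultaneously upper triangular. Proposition~\ref{prop_sim_poly} then yields the factorization $\scharpoly{M}{N}{t}=\prod_{i=1}^{n}\{(\lambda_i-s)(\mu_i-s)-t\}$ asserted in $(2)$.

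For $(2)\Rightarrow(1)$, which requires no commutativity, I would specialize to $t=0$ and use the first identity of Proposition~\ref{prop relation gchar char} to obtain the polynomial identity
\[
\varphi_M(s)\,\varphi_N(s)=\scharpoly{M}{N}{0}=\prod_{i=1}^{n}(\lambda_i-s)(\mu_i-s)
\]
in $\cfield[s]$. Since $\cfield[s]$ is a unique factorization domain and the right-hand side is a product of degree-one polynomials over $\cfield$, each of the divisors $\varphi_M(s)$ and $\varphi_N(s)$ is, up to a nonzero constant, a product of degree-one polynomials over $\cfield$; that is, $\varphi_M$ and $\varphi_N$ split completely over $\cfield$. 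Hence every eigenvalue of $M$ and every eigenvalue of $N$, being a root of $\varphi_M$ or of $\varphi_N$, lies in $\cfield$.

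The only step with any real content is the simultaneous triangularization invoked in $(1)\Rightarrow(2)$; the remainder is formal manipulation with Propositions~\ref{prop relation gchar char} and~\ref{prop_sim_poly} together with unique factorization. If one prefers, the triangularization can simply be cited as a standard fact of linear algebra rather than reproved here.
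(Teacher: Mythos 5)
Your proof is correct and follows essentially the same route as the paper: the paper deduces $(1)\Rightarrow(2)$ from simultaneous triangularizability together with Proposition~\ref{prop_sim_poly}, and dismisses $(2)\Rightarrow(1)$ as clear (implicitly via the specialization $\scharpoly{M}{N}{0}=\varphi_M(s)\varphi_N(s)$ that you spell out). You merely supply the standard inductive proof of simultaneous triangularization and the unique-factorization argument that the paper leaves implicit.
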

\begin{proof}
It is clear that $(2)$ implies $(1)$.
If all eigenvalues of $M$, $N$ are in $\cfield$, then $M$, $N$ are simultaneously triangularizable. Hence, Proposition~\ref{prop_sim_poly} gives $(2)$.
\end{proof}

\begin{proposition}\label{sympl eigen}
Let $M,N \in \operatorname{End}(V)$ and $\lambda,\mu \in \cfield$.
If $(\lambda-s)(\mu-s)$ is 
an eigenvalue of $\sendo{M}{N}$,
then 
$\lambda\mu$ is an eigenvalue of $MN$ and $NM$ and
$\lambda+\mu$ is an eigenvalue of $M+N$.
%
Moreover, if 
the characteristic polynomial 
of $M$ 
is equal to 
that of $N$,
 then $\lambda, \mu$ are eigenvalues of $M$ and $N$.
\end{proposition}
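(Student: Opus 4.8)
The plan is to derive everything from the two identities in Proposition~\ref{prop relation gchar  char} together with two specializations of the variable $s$. First observe that the hypothesis --- that $(\lambda-s)(\mu-s)$ is an eigenvalue of $\sendo{M}{N}$ acting on $\exspace$ --- is exactly the vanishing $\scharpoly{M}{N}{(\lambda-s)(\mu-s)}=0$ in $\ccfield$. Since the entries of $M-s\idmap{}$ and $N-s\idmap{}$ lie in $\cfield[s]$, we have $\scharpoly{M}{N}{t}\in\cfield[s,t]$ and hence $\scharpoly{M}{N}{(\lambda-s)(\mu-s)}\in\cfield[s]$; so this vanishing actually says that $\scharpoly{M}{N}{(\lambda-s)(\mu-s)}$ is the \emph{zero polynomial in $s$}, and in particular we are free to substitute any element of $\cfield$ for $s$.

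For the first assertion, I would apply the second item of Proposition~\ref{prop relation gchar  char} with $\sigma=\lambda$, $\tau=\mu$, which gives
\[
\scharpoly{M}{N}{(\lambda-s)(\mu-s)}=\varphi_{MN}(\lambda\mu)+\dots+\varphi_{M+N}(\lambda+\mu)(-s)^n .
\]
The left-hand side is the zero polynomial in $s$; its constant term (the value at $s=0$, where $\sendo{M}{N}$ specializes to $MN$) is $\varphi_{MN}(\lambda\mu)$, and the coefficient of $s^n$ is $(-1)^n\varphi_{M+N}(\lambda+\mu)$. Hence $\varphi_{MN}(\lambda\mu)=0$ and $\varphi_{M+N}(\lambda+\mu)=0$, so $\lambda\mu$ is an eigenvalue of $MN$ and $\lambda+\mu$ is an eigenvalue of $M+N$. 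Since $MN$ and $NM$ have the same characteristic polynomial, $\lambda\mu$ is an eigenvalue of $NM$ as well.

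For the ``moreover'' part the key move is to substitute $s=\lambda$ into the identity $\scharpoly{M}{N}{(\lambda-s)(\mu-s)}=0$. At $s=\lambda$ the argument $(\lambda-s)(\mu-s)$ becomes $0$, so the left-hand side becomes the value at $s=\lambda$ of $\scharpoly{M}{N}{0}$, which by the first item of Proposition~\ref{prop relation gchar  char} equals $\varphi_M(\lambda)\varphi_N(\lambda)$. Thus $\varphi_M(\lambda)\varphi_N(\lambda)=0$, and when $\varphi_M=\varphi_N$ this forces $\varphi_M(\lambda)=0$; that is, $\lambda$ is an eigenvalue of $M$ and of $N$. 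Substituting $s=\mu$ gives the same conclusion for $\mu$.

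I do not expect a genuine obstacle here. The one point that must be handled with care is the very first step: recognizing that the hypothesis amounts to the \emph{identical} vanishing of a polynomial in $s$, which is what legitimizes the specializations $s=\lambda$ and $s=\mu$. After that, everything reduces to reading off two coefficients from Proposition~\ref{prop relation gchar  char} and invoking the standard fact that $\varphi_{MN}=\varphi_{NM}$.
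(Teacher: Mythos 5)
Your argument is correct and is exactly the route the paper intends: its proof of this proposition is the one-line remark that it follows from Proposition~\ref{prop relation gchar  char}, and you have simply filled in the details (identifying the hypothesis with the identical vanishing of a polynomial in $s$, reading off the constant term and the top coefficient for the first assertion, and specializing $s=\lambda$, $s=\mu$ together with $\varphi_{MN}=\varphi_{NM}$ for the rest). No gaps.
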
 
\begin{proof}
It is easy to see from Proposition~\ref{prop relation gchar  char}. 
%
\end{proof}
By using this proposition, we give an example of endomorphisms $M$, $N$ which satisfy that 
$
\scharpoly{M}{N}{(\lambda-s)(\mu-s)}\neq 0
$
for all $\lambda, \mu \in \cfield$.
\begin{example}
Let $V=\cfield^2$. Define two matrices $M$, $N$ by 
\[
M=
\left(
\begin{array}{cc}
0 & 1 \\
0 & 0  \\
\end{array}
\right), \
N=
\left(
\begin{array}{cccc}
0 & 0 \\
1 & 0 
\end{array}
\right).
\]
It is easily shown that
$
\varphi_M(t)=\varphi_N(t)=t^2$ and that 
$\varphi_{M+N}(t)=(1-t)(1+t).
$
Hence, 
Proposition~\ref{sympl eigen} shows
that 
$
\scharpoly{M}{N}{(\lambda-s)(\mu-s)}\neq 0
$
 for all $\lambda, \mu \in \cfield$.
We note that $\varphi_{\sendo{M}{N}}(t)=s^4-(1+2s^2)t+t^2$.
\end{example}

It is well-known that 
$(\lambda-s)(\mu-s)$ is an eigenvalue of $\sendo{M}{N}$
if and only if $\geigensp{\sendo{M}{N}}{(\lambda-s)(\mu-s)}\neq \{0\}$. We show that  if $MN=NM$, then 
\begin{itemize}
\item 
the space $\geigensp{\sendo{M}{N}}{(\lambda-s)(\mu-s)}$ is generated by vectors in $\geigensp{M,N}{\lambda, \mu}$,
\item
the space  $\geigensp{M,N}{\lambda, \mu}$ is equal to $\geigensp{MN}{\lambda\mu}\cap \geigensp{M+N}{\lambda+\mu}$
\end{itemize}
We first give a lemma.
\begin{lemma}\label{lemma subset eigensp}
Let $M,N\in \operatorname{End}(V)$ and $\lambda, \mu, \in\cfield$. Suppose that $MN=NM$. Then
\begin{align*}
\geigensp{M,N}{\lambda, \mu}&\subset\geigensp{MN}{\lambda\mu}\cap \geigensp{M+N}{\lambda+\mu}, \\
(\geigensp{MN}{\lambda\mu}\cap \geigensp{M+N}{\lambda+\mu})\otimes_{\cfield}\ccfield&\subset \geigensp{(M-s\idmap{})(N-s\idmap{})}{(\lambda-s)(\mu-s)}.
\end{align*}
\end{lemma}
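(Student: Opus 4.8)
The plan is to reduce both inclusions to the elementary fact that the sum and the product of two commuting nilpotent endomorphisms are again nilpotent, together with two bookkeeping identities. I will repeatedly use that if an endomorphism is nilpotent on a subspace $W$ with $\dim_{\cfield}W\le n$, then its $n$-th power vanishes on $W$, and that for any $T\in\operatorname{End}(V)$ one has $\kernel T^k=\kernel T^n$ for all $k\ge n$, so that membership in a generalized eigenspace only requires annihilation by \emph{some} power of the relevant operator.

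For the first inclusion I would put $U=\geigensp{M}{\lambda}\cap\geigensp{N}{\mu}$ and $U'=\geigensp{M}{\mu}\cap\geigensp{N}{\lambda}$, so that $\geigensp{M,N}{\lambda,\mu}=U+U'$. Since $MN=NM$, the subspace $U$ is invariant under $M$ and under $N$, and $M-\lambda\idmap{}$, $N-\mu\idmap{}$ restrict to commuting nilpotent endomorphisms of $U$. From the identities
\begin{align*}
MN-\lambda\mu\idmap{}&=(M-\lambda\idmap{})(N-\mu\idmap{})+\lambda(N-\mu\idmap{})+\mu(M-\lambda\idmap{}),\\
(M+N)-(\lambda+\mu)\idmap{}&=(M-\lambda\idmap{})+(N-\mu\idmap{}),
\end{align*}
it follows that $MN-\lambda\mu\idmap{}$ and $(M+N)-(\lambda+\mu)\idmap{}$ are nilpotent on $U$, whence $U\subset\geigensp{MN}{\lambda\mu}\cap\geigensp{M+N}{\lambda+\mu}$. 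Interchanging $\lambda$ and $\mu$ gives the same conclusion for $U'$, and the first claim follows by taking the sum.

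For the second inclusion I would set $P=MN-\lambda\mu\idmap{}$ and $Q=(M+N)-(\lambda+\mu)\idmap{}$. Since $MN$ commutes with $M$ and with $N$, it commutes with $M+N$, so $P$ and $Q$ commute; consequently $\kernel P^n$ is $Q$-invariant and $\kernel Q^n$ is $P$-invariant, so $W:=\geigensp{MN}{\lambda\mu}\cap\geigensp{M+N}{\lambda+\mu}=\kernel P^n\cap\kernel Q^n$ is invariant under both $P$ and $Q$, on which they restrict to commuting nilpotent endomorphisms. The expansion
\[
\sendo{M}{N}-(\lambda-s)(\mu-s)\idmap{}=(MN-\lambda\mu\idmap{})-s\bigl((M+N)-(\lambda+\mu)\idmap{}\bigr)=P-sQ
\]
then shows that on $W\otimes_{\cfield}\ccfield$ the endomorphism $\sendo{M}{N}-(\lambda-s)(\mu-s)\idmap{}$ acts as $P|_W-sQ|_W$, a $\ccfield$-linear combination of commuting nilpotents, hence nilpotent. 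Thus every vector of $W\otimes_{\cfield}\ccfield$ is annihilated by a power of $\sendo{M}{N}-(\lambda-s)(\mu-s)\idmap{}$, and since this operator acts on the $n$-dimensional $\ccfield$-space $\exspace$, it is annihilated by its $n$-th power; that is, $W\otimes_{\cfield}\ccfield\subset\geigensp{\sendo{M}{N}}{(\lambda-s)(\mu-s)}$.

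The argument is routine and I expect no real obstacle; the content lies entirely in the two identities and the behaviour of commuting nilpotents. The one point deserving a moment of care is the scalar extension in the second part: one must note that $P$- and $Q$-invariance of $W$ and nilpotency of $P|_W$ and $Q|_W$ are statements over $\cfield$ which persist verbatim after tensoring with $\ccfield$, so that $P|_W-sQ|_W$ is genuinely nilpotent as a $\ccfield$-endomorphism of $W\otimes_{\cfield}\ccfield$, and hence that its kernel, and the generalized eigenspace over $\ccfield$, are as claimed.
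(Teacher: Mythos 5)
Your proof is correct. It follows the same overall skeleton as the paper's --- restrict to the relevant invariant subspace and show that the operator in question, shifted by the candidate eigenvalue, is nilpotent there, using in the second step exactly the same key expansion $\sendo{M}{N}-(\lambda-s)(\mu-s)\idmap{}=(MN-\lambda\mu\idmap{})-s\{(M+N)-(\lambda+\mu)\idmap{}\}$ --- but the device you use for the nilpotency step differs. The paper invokes simultaneous triangularizability of the commuting restrictions (legitimate because on $\geigensp{M}{\lambda}\cap\geigensp{N}{\mu}$ all eigenvalues of $M|_W$ and $N|_W$ lie in $\cfield$, namely $\lambda$ and $\mu$) and reads off the diagonal of $[M_1N_1]$ and $[M_1+N_1]$; you instead write $MN-\lambda\mu\idmap{}$ and $(M+N)-(\lambda+\mu)\idmap{}$ explicitly as constant-term-free polynomials in the commuting nilpotents $M-\lambda\idmap{}$ and $N-\mu\idmap{}$. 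Your route is basis-free and avoids the (small but nonzero) overhead of justifying simultaneous triangularization, and your closing remark about the persistence of invariance and nilpotency under the scalar extension $-\otimes_{\cfield}\ccfield$ makes explicit a point the paper leaves implicit; the paper's version has the mild advantage of reusing verbatim the triangularization already set up for Proposition~\ref{prop_sim_poly}. Either way the content is the two identities plus the behaviour of commuting nilpotents, and no step of yours has a gap.
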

\begin{proof}
To show the first implication,
it suffices to see that 
$
\geigensp{M}{\lambda} \cap \geigensp{N}{\mu}
\subset \geigensp{MN}{\lambda\mu}\cap \geigensp{M+N}{\lambda+\mu}$.
Let $W=\geigensp{M}{\lambda} \cap \geigensp{N}{\mu}$ and $M_1=M|_W$, $N_1=N|_W$. Since $M_1$, $N_1$ are simultaneously triangularizable, there exists a basis $\mathcal{B}_1$ of $W$ such that 
\[
[M_1]_{\mathcal{B}_1} 
=\left(
\begin{array}{cccc}
\lambda & & *\\
& \ddots  \\
0 &  & \lambda
\end{array}
\right), \ 
[N_1]_{\mathcal{B}_1} 
=\left(
\begin{array}{cccc}
\mu & & *\\
& \ddots  \\
0 &  & \mu
\end{array}
\right).
\] 
Hence, $W=\geigensp{M_1 N_1}{\lambda\mu}\cap \geigensp{M_1+N_1}{\lambda+\mu}\subset\geigensp{MN}{\lambda\mu}\cap \geigensp{M+N}{\lambda+\mu}$.
\par
Let $Z=(\geigensp{MN}{\lambda\mu}\cap \geigensp{M+N}{\lambda+\mu})\otimes_{\cfield}\ccfield$. 
It is easy to see that 
\[
\geigensp{\sendo{M}{N}}{(\lambda-s)(\mu-s)}=
\geigensp{MN-s(M+N)}{\lambda\mu-s(\lambda+\mu)} .
\]
 Since $MN|_Z$, $(M+N)|_Z$ are simultaneously triangularizable, in the same way as above,  
we get
$Z
\subset \geigensp{MN-s(M+N)}{\lambda\mu-s(\lambda+\mu)}$.
\end{proof}
\begin{theorem}\label{thm geigensp}
Under the  same assumption as the lemma above,
we have
\begin{align*}
\geigensp{M,N}{\lambda, \mu}&=\geigensp{MN}{\lambda\mu}\cap \geigensp{M+N}{\lambda+\mu}, \\
\geigensp{M,N}{\lambda, \mu}\otimes_{\cfield}{\ccfield}
&=\geigensp{\sendo{M}{N}}{(\lambda-s)(\mu-s)}.
\end{align*}
\end{theorem}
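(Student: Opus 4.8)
The plan is to supply, for each of the two identities, the inclusion opposite to the one already furnished by Lemma~\ref{lemma subset eigensp}. Put $W=\geigensp{MN}{\lambda\mu}\cap\geigensp{M+N}{\lambda+\mu}$, so that the lemma gives $\geigensp{M,N}{\lambda,\mu}\subseteq W$ and $W\otimes_{\cfield}\ccfield\subseteq\geigensp{\sendo{M}{N}}{(\lambda-s)(\mu-s)}$. Two elementary facts about a finite dimensional space are used throughout: commuting nilpotent endomorphisms have nilpotent sum and difference, and if $A$ is invertible, commutes with $C$, and $AC$ is nilpotent, then $C$ is nilpotent. Together with Fitting's lemma these do essentially all the work.

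For the first identity I would argue over $\cfield$. Since $MN=NM$, the endomorphisms $M$ and $N$ preserve $W$, and on $W$ both $MN-\lambda\mu\idmap{}$ and $M+N-(\lambda+\mu)\idmap{}$ are nilpotent; substituting $\lambda\mu\idmap{}=MN-(MN-\lambda\mu\idmap{})$ and $(\lambda+\mu)\idmap{}=(M+N)-(M+N-(\lambda+\mu)\idmap{})$ into $(M-\lambda\idmap{})(M-\mu\idmap{})=M^{2}-(\lambda+\mu)M+\lambda\mu\idmap{}$ and cancelling with $MN=NM$ shows that $(M|_{W}-\lambda\idmap{})(M|_{W}-\mu\idmap{})$ is nilpotent on $W$. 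By Fitting's lemma applied to $M|_{W}-\lambda\idmap{}$ one gets $W=W_{1}\oplus W_{2}$ with $M-\lambda\idmap{}$ nilpotent on $W_{1}$ (so $W_{1}\subseteq\geigensp{M}{\lambda}$) and invertible on $W_{2}$, whence by the second elementary fact $M-\mu\idmap{}$ is nilpotent on $W_{2}$ and $W_{2}\subseteq\geigensp{M}{\mu}$. On $W_{1}$ one has $N-\mu\idmap{}=(M+N-(\lambda+\mu)\idmap{})-(M-\lambda\idmap{})$, a difference of commuting nilpotents, hence nilpotent, so $W_{1}\subseteq\geigensp{N}{\mu}$; symmetrically $W_{2}\subseteq\geigensp{N}{\lambda}$. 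Thus $W=W_{1}\oplus W_{2}\subseteq\geigensp{M,N}{\lambda,\mu}$, and with the reverse inclusion from the lemma the first identity follows (the case $\lambda=\mu$ is the same, with $W_{2}=0$).

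For the second identity it suffices to prove $\geigensp{\sendo{M}{N}}{(\lambda-s)(\mu-s)}\subseteq W\otimes_{\cfield}\ccfield$. Applying Fitting's lemma over $\cfield$ first to $MN-\lambda\mu\idmap{}$ on $V$ and then to $M+N-(\lambda+\mu)\idmap{}$ on $\geigensp{MN}{\lambda\mu}$—both decompositions being preserved by $MN$ and $M+N$, as these commute—yields $V=W\oplus W''\oplus R$ with all three summands invariant under $MN$ and $M+N$, hence under $T:=MN-s(M+N)$, where $M+N-(\lambda+\mu)\idmap{}$ is invertible on $W''$ and $MN-\lambda\mu\idmap{}$ is invertible on $R$. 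Writing $c=\lambda\mu-s(\lambda+\mu)$, the space $\geigensp{\sendo{M}{N}}{(\lambda-s)(\mu-s)}=\geigensp{T}{c}$ (as in the proof of the lemma) is the kernel of a power of $T-c\idmap{}$, so it splits along $W\otimes\ccfield$, $W''\otimes\ccfield$, $R\otimes\ccfield$. On $W\otimes\ccfield$ the operator $T-c\idmap{}=(MN-\lambda\mu\idmap{})-s(M+N-(\lambda+\mu)\idmap{})$ is a combination of commuting nilpotents, hence nilpotent. On $R\otimes\ccfield$ (resp. $W''\otimes\ccfield$) the determinant of $T-c\idmap{}$, as a polynomial in $s$ over $\cfield$, is nonzero—at $s=0$ it equals $\det\bigl((MN-\lambda\mu\idmap{})|_{R}\bigr)\ne0$ in the first case, and has leading coefficient $\pm\det\bigl((M+N-(\lambda+\mu)\idmap{})|_{W''}\bigr)\ne0$ in the second—so $T-c\idmap{}$ is invertible there. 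Hence $\geigensp{T}{c}=W\otimes_{\cfield}\ccfield$, which with the inclusion from the lemma completes the proof.

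The computations are all routine; the one place to be careful is the first identity, where one must check that every operator produced really is nilpotent and that the relevant operators commute before invoking the two elementary facts—this is exactly where $MN=NM$ is used. The only other input worth flagging is that a square matrix over $\cfield[s]$ with nonzero determinant is invertible over $\ccfield=\cfield(s)$.
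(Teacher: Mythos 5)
Your proof is correct, and it takes a genuinely different route from the paper's. The paper proves only the single inclusion $\geigensp{\sendo{M}{N}}{(\lambda-s)(\mu-s)}\subset\geigensp{M,N}{\lambda,\mu}\otimes_{\cfield}\ccfield$ and closes the cycle of three inclusions (the other two coming from Lemma~\ref{lemma subset eigensp}), which forces all the spaces to coincide; the mechanism there is to set $M_1=M|_W$, $N_1=N|_W$ on $W=\geigensp{\sendo{M}{N}}{(\lambda-s)(\mu-s)}$, observe that $\varphi_{M_1}(s)\varphi_{N_1}(s)=(\lambda-s)^k(\mu-s)^k$ pins the eigenvalues of $M_1,N_1$ to $\{\lambda,\mu\}$, and then kill the unwanted summands $\geigensp{M_1}{\lambda}\cap\geigensp{N_1}{\lambda}$ and $\geigensp{M_1}{\mu}\cap\geigensp{N_1}{\mu}$ by a second application of the same lemma. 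You instead supply the two missing reverse inclusions separately, via Fitting decompositions and the arithmetic of commuting nilpotents. Your route buys a proof of the first identity entirely over $\cfield$, with no detour through $\ccfield$ or characteristic polynomials, and the identity $(M-\lambda\idmap{})(M-\mu\idmap{})=BM-A$ with $A=MN-\lambda\mu\idmap{}$, $B=M+N-(\lambda+\mu)\idmap{}$ makes visible exactly where $MN=NM$ enters; your invertibility check on $R\otimes_{\cfield}\ccfield$ and $W''\otimes_{\cfield}\ccfield$ through a nonvanishing determinant in $\cfield[s]$ is also sound. The paper's route is shorter because it reuses the characteristic-polynomial machinery of this section and needs only one new inclusion. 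Both arguments are complete.
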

\begin{proof}
By Lemma~\ref{lemma subset eigensp}, it suffices to prove that 
\[
\geigensp{\sendo{M}{N}}{(\lambda-s)(\mu-s)}\subset  \geigensp{M,N}{\lambda, \mu}\otimes_{\cfield}\ccfield.
\]
Let $W=\geigensp{\sendo{M}{N}}{(\lambda-s)(\mu-s)}$ and let $M_1=M|_W$, $N_1=N|_W$. 
Then 
\begin{equation}\label{eq char poly}
\varphi_{M_1}(s)\varphi_{N_1}(s)=(\lambda-s)^k(\mu-s)^k.
\end{equation}
In the case where $\lambda=\mu$,  this implies that
$
W=\geigensp{M_1}{\lambda}\cap \geigensp{N_1}{\lambda}=\geigensp{M_1,N_1}{\lambda,\lambda }\subset\geigensp{M,N}{\lambda, \lambda}\otimes_{\cfield}\ccfield.
$
Suppose that $\lambda\neq \mu$. Then  the equality \eqref{eq char poly} implies that
$W$ is the sum of $\geigensp{M_1,N_1}{\lambda,\mu}$, $\geigensp{M_1}{\lambda}\cap\geigensp{N_1}{\lambda}$, $\geigensp{M_1}{\mu}\cap\geigensp{N_1}{\mu}$.
Applying Lemma~\ref{lemma subset eigensp} for $M_1$, $N_1$, we get
\[
\geigensp{M_1}{\lambda}\cap\geigensp{N_1}{\lambda} \subset \geigensp{(M_1-s\idmap{})(N_1-s\idmap{})}{\lambda-s)(\lambda-s)}=\{0\}.
\] 
In the same way, $\geigensp{M_1}{\mu}\cap\geigensp{N_1}{\mu} =\{0\}$.
Hence, we get $W=\geigensp{M_1,N_1}{\lambda, \mu}\subset\geigensp{M,N}{\lambda, \mu}\otimes_{\cfield}\ccfield$.
\end{proof}

As a consequence of Theorem~\ref{thm geigensp}, we have the following.
\begin{corollary}\label{corollary decomposition}
Let $M,N\in \operatorname{End}(V)$. Suppose that $MN=NM$ and that
\[
\scharpoly{M}{N}{t}=\prod_{i=1}^k\{(\lambda_i-s)(\mu_i-s)-t\}^{m_i}
\]  
so that  $(\lambda_i-s)(\mu_i-s) \neq (\lambda_j-s)(\mu_j-s)$ for $i\neq j$. Then we have the following.
\begin{itemize}
\item 
The space $V$ is the direct sum of the $m_i$-dimensional $(M,N)$-invariant subspaces $\geigensp{M,N}{\lambda_i, \mu_i}$.
\item
Let $P_1,\dots,P_k$ be  projections with respect to $V=\bigoplus_{i=1}^k \geigensp{M,N}{\lambda_i,\mu_i}$ and let $Q_1,\dots, Q_k$ be projections with respect to 
\[
V\otimes_{\cfield}\ccfield=\bigoplus_{i=1}^k\geigensp{\sendo{M}{N}}{(\lambda_i-s)(\mu_i-s)}.
\]
Then 
$
P_i=Q_i.
$
\end{itemize}
\end{corollary}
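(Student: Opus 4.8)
The plan is to derive everything from Theorem~\ref{thm geigensp} together with the standard structure theory of a single commuting family. First, since $MN = NM$, the polynomial $\varphi_{\sendo{M}{N}}(t)$ splits completely over $\ccfield$ by Proposition~\ref{prop all eigenvalue}; the hypothesis records this splitting, and grouping equal roots gives the factorization with distinct $(\lambda_i - s)(\mu_i - s)$. Applying Cayley--Hamilton to the endomorphism $\sendo{M}{N}$ of $\exspace$ (whose eigenvalues $(\lambda_i-s)(\mu_i-s)$ all lie in $\ccfield$), we obtain the generalized eigenspace decomposition $\exspace = \bigoplus_{i=1}^k \geigensp{\sendo{M}{N}}{(\lambda_i-s)(\mu_i-s)}$, with $\dim_{\ccfield}$ of the $i$-th summand equal to $m_i$. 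By the second equality in Theorem~\ref{thm geigensp}, each summand is $\geigensp{M,N}{\lambda_i,\mu_i}\otimes_{\cfield}\ccfield$, so $\dim_{\ccfield}\bigl(\geigensp{M,N}{\lambda_i,\mu_i}\otimes_{\cfield}\ccfield\bigr) = m_i$, hence $\dim_{\cfield}\geigensp{M,N}{\lambda_i,\mu_i} = m_i$. Summing, $\sum_i m_i = n = \dim_{\cfield} V$, and the natural map $\bigoplus_i \geigensp{M,N}{\lambda_i,\mu_i} \to V$ becomes an isomorphism after $-\otimes_{\cfield}\ccfield$; since $\ccfield$ is a faithfully flat extension of $\cfield$ (or simply: a $\cfield$-linear map is injective/surjective iff it is so after base change to any extension field), it is already an isomorphism over $\cfield$. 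This gives the first bullet; $(M,N)$-invariance of $\geigensp{M,N}{\lambda_i,\mu_i}$ is immediate from its definition via kernels of powers of $M-\lambda E$, $M-\mu E$, $N-\lambda E$, $N-\mu E$, all of which commute with $M$ and $N$.

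For the second bullet, the point is that the projections $P_i$ and $Q_i$ are both intrinsically determined by the same direct sum decomposition, once we know the decomposition over $\ccfield$ is obtained from the one over $\cfield$ by base change. Explicitly, let $\iota\colon V \hookrightarrow \exspace$ be $v \mapsto v\otimes 1$. The decomposition $V = \bigoplus_i \geigensp{M,N}{\lambda_i,\mu_i}$ base-changes to $\exspace = \bigoplus_i \bigl(\geigensp{M,N}{\lambda_i,\mu_i}\otimes_{\cfield}\ccfield\bigr)$, and this latter decomposition coincides, summand by summand, with $\exspace = \bigoplus_i \geigensp{\sendo{M}{N}}{(\lambda_i-s)(\mu_i-s)}$ by Theorem~\ref{thm geigensp}. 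Hence $Q_i \otimes \mathrm{id}_{\ccfield}$, the $\ccfield$-linear projection onto the $i$-th summand of the base-changed decomposition, equals $P_i \otimes \mathrm{id}_{\ccfield}$. Restricting along $\iota$ (equivalently, noting that a $\cfield$-linear endomorphism of $V$ is determined by its $\ccfield$-linear extension), we conclude $P_i = Q_i$.

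The main obstacle is the bookkeeping around the base-change argument: one must be careful that "the generalized eigenspace decomposition over $\ccfield$ is the base change of the one over $\cfield$" is exactly what Theorem~\ref{thm geigensp} delivers — the nontrivial content is that $\geigensp{\sendo{M}{N}}{(\lambda_i-s)(\mu_i-s)}$ contains no "extra" vectors not coming from $V$, which is precisely the inclusion proved there. Once that is in hand, everything else is the elementary fact that a linear map between finite-dimensional $\cfield$-vector spaces is an isomorphism iff its $\ccfield$-extension is, applied both to the comparison map of direct sums and to the difference $P_i - Q_i$. A minor point to mention is that the roots $(\lambda_i - s)(\mu_i - s) \in \cfield[s] \subset \ccfield$ are genuinely distinct elements of $\ccfield$ by hypothesis, so the $\geigensp{\sendo{M}{N}}{\,\cdot\,}$ are honest generalized eigenspaces for distinct eigenvalues and Cayley--Hamilton applies verbatim.
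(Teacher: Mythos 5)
Your argument is correct and follows the route the paper intends: the paper states this corollary without proof as a direct consequence of Theorem~\ref{thm geigensp}, and your write-up supplies exactly the missing steps (generalized eigenspace decomposition of $\sendo{M}{N}$ over $\ccfield$, identification of each summand as $\geigensp{M,N}{\lambda_i,\mu_i}\otimes_{\cfield}\ccfield$ via the theorem, and descent of the decomposition and of the projections by comparing with the base change). The base-change bookkeeping, including the remark that $P_i=Q_i$ is to be read via the paper's convention of identifying an endomorphism with its natural extension, is handled correctly.
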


\begin{proposition}\label{prop eigenvalue eigenvector}
Let $M,N\in \operatorname{End}(V)$ and $\lambda, \mu \in \cfield$. Suppose that $MN=NM$. Then
$(\lambda-s)(\mu-s)$ is an eigenvalue of $\sendo{M}{N}$
if and only if $\eigensp{M,N}{\lambda , \mu}\neq\{0\}$.
\end{proposition}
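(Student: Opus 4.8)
The plan is to prove both implications by relating the eigenspace condition to the generalized eigenspace condition already settled in Theorem~\ref{thm geigensp}. The ``if'' direction is the softer one: if $\eigensp{M,N}{\lambda,\mu}\neq\{0\}$, then in particular $\geigensp{M,N}{\lambda,\mu}\neq\{0\}$ since $\eigensp{M,N}{\lambda,\mu}\subset\geigensp{M,N}{\lambda,\mu}$, and by the second equality of Theorem~\ref{thm geigensp} we get $\geigensp{\sendo{M}{N}}{(\lambda-s)(\mu-s)}=\geigensp{M,N}{\lambda,\mu}\otimes_{\cfield}\ccfield\neq\{0\}$, which is precisely the statement that $(\lambda-s)(\mu-s)$ is an eigenvalue of $\sendo{M}{N}$.

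For the ``only if'' direction, suppose $(\lambda-s)(\mu-s)$ is an eigenvalue of $\sendo{M}{N}$. By Theorem~\ref{thm geigensp} this means $\geigensp{M,N}{\lambda,\mu}\neq\{0\}$, i.e.\ at least one of $\geigensp{M}{\lambda}\cap\geigensp{N}{\mu}$ and $\geigensp{M}{\mu}\cap\geigensp{N}{\lambda}$ is nonzero; by symmetry in $\lambda,\mu$ assume $W:=\geigensp{M}{\lambda}\cap\geigensp{N}{\mu}\neq\{0\}$. The key observation is that $W$ is an $(M,N)$-invariant subspace on which $M$ acts with single eigenvalue $\lambda$ and $N$ with single eigenvalue $\mu$; concretely, $M|_W-\lambda E$ and $N|_W-\mu E$ are nilpotent and commute (since $MN=NM$), so they are simultaneously triangularizable. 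Hence there is a common eigenvector $w\in W$, $w\neq 0$, with $(M|_W-\lambda E)w=0$ and $(N|_W-\mu E)w=0$; this is simply the standard fact that a nonzero space on which a family of commuting nilpotent operators acts contains a common kernel vector (the intersection $\kernel(M|_W-\lambda E)\cap\kernel(N|_W-\mu E)$ is nonzero, being cut out inside $W$ by commuting nilpotent maps). Then $w\in\eigensp{M}{\lambda}\cap\eigensp{N}{\mu}\subset\eigensp{M,N}{\lambda,\mu}$, so $\eigensp{M,N}{\lambda,\mu}\neq\{0\}$.

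The main thing to get right is the simultaneous-triangularizability step giving a genuine common eigenvector rather than merely a common generalized eigenvector: one must invoke commutativity of $M$ and $N$ (hence of $M|_W$ and $N|_W$) to ensure that $\geigensp{M}{\lambda}\cap\geigensp{N}{\mu}$ is itself $M$- and $N$-stable and that the restricted operators are simultaneously upper-triangular, so their common kernel (the first basis vector in such a triangularizing basis spans a line killed by both $M-\lambda E$ and $N-\mu E$) is nonzero. This is the same mechanism already used in the proof of Lemma~\ref{lemma subset eigensp}, so the argument is short. No further obstacle is expected; the statement then follows by combining the two directions.
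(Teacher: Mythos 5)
Your proof is correct and follows essentially the same route as the paper: both directions reduce to Theorem~\ref{thm geigensp}, and your commuting-nilpotents argument is precisely the justification the paper leaves implicit when it asserts that $\eigensp{M,N}{\lambda,\mu}=\{0\}$ is equivalent to $\geigensp{M,N}{\lambda,\mu}=\{0\}$. The only cosmetic difference is in the ``if'' direction, where the paper checks directly that every vector of $\eigensp{M,N}{\lambda,\mu}\otimes_{\cfield}\ccfield$ is a genuine eigenvector of $\sendo{M}{N}$ instead of passing through the generalized eigenspace.
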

\begin{proof}
Suppose that $\eigensp{M,N}{\lambda , \mu}\neq\{0\}$.  Let $v\in (\eigensp{M,N}{\lambda , \mu})\otimes_{\cfield}\ccfield$. Then we get $\sendo{M}{N}v=(\lambda-s)(\mu-s)v$. Hence, $(\lambda-s)(\mu-s)$ is an eigenvalue of $\sendo{M}{N}$. 
\par
The only if part follows from 
Theorem~\ref{thm geigensp}
 and the fact that $\eigensp{M,N}{\lambda,\mu}=\{0\}$ is equivalent to $\geigensp{M,N}{\lambda,\mu}=\{0\}$.
\end{proof}


\section{Applications of the symplectic characteristic polynomial}\label{Applications of the symplectic characteristic polynomial}
In this section, we prove several results 
for symplectically normal endomorphisms 
which are  concerned with the symplectic characteristic polynomial.
\par
Let $(V,\omega)$ be a $2n$-dimensional symplectic vector space 
over a field $\cfield$. 
\begin{proposition}
Let $M\in\operatorname{End}(V)$. Suppose that  $MM^{*\omega}=M^{*\omega}M$. 
The following are equivalent.
\begin{itemize}
\item [$(1)$]
All eigenvalues of $M$ are in $\cfield$.
\item [$(2)$]
The characteristic polynomial $\symplcharpoly^{\omega}_M(s,t)$ is of the form 
\[
\symplcharpoly^{\omega}_M(s,t)=\prod_{i=1}^n\{(\lambda_i-s)(\mu_i-s)-t\}, \quad \lambda_i,\mu_i \in \cfield.
\] 
\end{itemize}
\end{proposition}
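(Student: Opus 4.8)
The plan is to reduce the statement to Proposition~\ref{prop all eigenvalue}, applied to the commuting pair $M,M^{*\omega}$, together with the identity $\symplcharpoly^{\omega}_M(s,t)^2=\varphi_{\ssendo}(t)$ and the normalization $a_n(s)=(-1)^n$ supplied by items $(6)$ and $(7)$ of Proposition~\ref{prop symplectic char poly}. The implication $(2)\Rightarrow(1)$ is immediate: setting $t=0$ in the displayed product and using item $(2)$ of Proposition~\ref{prop symplectic char poly} gives $\varphi_M(s)=\prod_{i=1}^{n}(\lambda_i-s)(\mu_i-s)$, so $\varphi_M$ splits into linear factors over $\cfield$ and every eigenvalue of $M$ lies in $\cfield$.

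For $(1)\Rightarrow(2)$ I would first note that $M$ and $M^{*\omega}$ have the same characteristic polynomial, so $(1)$ says precisely that all eigenvalues of $M$ and of $M^{*\omega}$ lie in $\cfield$. Since $MM^{*\omega}=M^{*\omega}M$, Proposition~\ref{prop all eigenvalue} (applied to the $2n$-dimensional space $V$ with $N=M^{*\omega}$) gives
\[
\varphi_{\ssendo}(t)=\prod_{i=1}^{2n}\{(\lambda_i-s)(\mu_i-s)-t\},\qquad \lambda_i,\mu_i\in\cfield.
\]
Hence $\symplcharpoly^{\omega}_M(s,t)^2=\prod_{i=1}^{2n}\{(\lambda_i-s)(\mu_i-s)-t\}$ inside the polynomial ring $\cfield[s,t]$. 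Each factor $(\lambda_i-s)(\mu_i-s)-t$ is primitive of degree one in $t$ over $\cfield[s]$, hence irreducible in the unique factorization domain $\cfield[s,t]$; since the right-hand side is a square, its $2n$ irreducible factors can be grouped into $n$ pairs of equal factors, and therefore $\symplcharpoly^{\omega}_M(s,t)=\pm\prod_{j=1}^{n}\{(\lambda_{i_j}-s)(\mu_{i_j}-s)-t\}$ for a suitable choice of $n$ indices $i_1,\dots,i_n$. Comparing the coefficient of $t^n$ on both sides with item $(7)$ of Proposition~\ref{prop symplectic char poly}, which asserts that it equals $(-1)^n$, fixes the sign as $+$ and yields the claimed form.

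The only step that is not a direct citation of earlier results is the passage from $\symplcharpoly^{\omega}_M(s,t)^2$ back to $\symplcharpoly^{\omega}_M(s,t)$; the point there is to check irreducibility of the factors $(\lambda_i-s)(\mu_i-s)-t$ in $\cfield[s,t]$, so that unique factorization forces them to pair off. I do not expect any other obstacle.
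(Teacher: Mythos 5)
Your proposal is correct and follows the same route as the paper, which proves this by citing exactly the two ingredients you use: the identity $\symplcharpoly^{\omega}_M(s,t)^2=\varphi_{\ssendo}(t)$ together with Proposition~\ref{prop all eigenvalue} applied to the commuting pair $M$, $M^{*\omega}$. The paper leaves the square-root extraction implicit; your verification via irreducibility of $(\lambda-s)(\mu-s)-t$ in $\cfield[s,t]$ and the normalization $a_n(s)=(-1)^n$ is a sound way to fill in that step.
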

\begin{proof}
This follows from $\symplcharpoly^{\omega}_M(s,t)^2=\varphi_{\ssendo}(t)$ and Proposition~\ref{prop all eigenvalue}.
\end{proof}

Let $M \in \operatorname{End}(V)$. Suppose that all eigenvalues of $M$ are in $\cfield$.
By using the same idea of the proof of Lemma~1  in \cite{gohberg1990classification}, we can show that $V$ is the symplectically orthogonal direct sum of the symplectic subspaces $\symplgeigensp{\omega}{M}{\lambda}{\mu}$ associated with  two eigenvalues $\lambda, \mu$ of $M$.
We reformulate this result with the symplectic characteristic polynomial and give a short proof.
\begin{lemma}\label{lemma sympl decomp}
Let $M\in\operatorname{End}(V)$. 
Suppose that $MM^{*\omega}=M^{*\omega}M$. If  the symplectic characteristic polynomial $\symplcharpoly^{\omega}_M(s,t)$ is of the form
\[
\symplcharpoly^{\omega}_M(s,t)=\prod_{i=1}^k\{(\lambda_i-s)(\mu_i-s)-t\}^{m_i}, \quad \lambda_i,\mu_i \in \cfield
\]  
so that  $(\lambda_i-s)(\mu_i-s) \neq (\lambda_j-s)(\mu_j-s)$ for $i\neq j$, then 
$V$ is the symplectically orthogonal direct sum of  the $2m_i$-dimensional symplectic subspaces $\symplgeigensp{\omega}{M}{\lambda_i}{\mu_i}$.  
Precisely, 
we have the following.
\begin{itemize}
\item [$(1)$]
$\dim_{\cfield}\symplgeigensp{\omega}{M}{\lambda_i}{\mu_i}=2m_i$.
\item[$(2)$] 
$V=\bigoplus_{i=1}^k \symplgeigensp{\omega}{M}{\lambda_i}{\mu_i}$.
\item[$(3)$] 
$\symplgeigensp{\omega}{M}{\lambda_i}{\mu_i} \subset\left(\symplgeigensp{\omega}{M}{\lambda_j}{\mu_j}\right)^{\perp\omega} $ \quad 
for $i \neq j$.
\item[$(4)$] 
$\symplgeigensp{\omega}{M}{\lambda_i}{\mu_i}$ is an $(M,M^{*\omega})$-invariant symplectic subspace of $(V,\omega)$.
\end{itemize} 
\end{lemma}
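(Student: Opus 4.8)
The plan is to deduce items $(1)$, $(2)$ and the invariance half of $(4)$ straight from Section~\ref{characteristic polynomial of two endomorphisms}, applied to the commuting pair $M$ and $N:=M^{*\omega}$, and then to get the symplectic statements $(3)$ and the symplecticity in $(4)$ from the fact that $MM^{*\omega}$ and $M+M^{*\omega}$ are fixed by the symplectic adjoint. First I would note that $MN=NM$ by hypothesis, and that Proposition~\ref{prop symplectic char poly}$(6)$ gives
\[
\scharpoly{M}{M^{*\omega}}{t}=\varphi_{\ssendo}(t)=\symplcharpoly^{\omega}_M(s,t)^2=\prod_{i=1}^k\{(\lambda_i-s)(\mu_i-s)-t\}^{2m_i},
\]
with the factors $(\lambda_i-s)(\mu_i-s)$ pairwise distinct. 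So Corollary~\ref{corollary decomposition}, applied to $(M,M^{*\omega})$ with multiplicities $2m_i$, yields at once that $V=\bigoplus_{i=1}^k\symplgeigensp{\omega}{M}{\lambda_i}{\mu_i}$, that $\dim_{\cfield}\symplgeigensp{\omega}{M}{\lambda_i}{\mu_i}=2m_i$, and that each summand is $(M,M^{*\omega})$-invariant; this is $(1)$, $(2)$, and the invariance part of $(4)$.

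Next I would prove the orthogonality $(3)$. Write $W_i:=\symplgeigensp{\omega}{M}{\lambda_i}{\mu_i}$. Both $MM^{*\omega}$ and $M+M^{*\omega}$ satisfy $S^{*\omega}=S$, since $(MM^{*\omega})^{*\omega}=(M^{*\omega})^{*\omega}M^{*\omega}=MM^{*\omega}$ and $(M+M^{*\omega})^{*\omega}=M+M^{*\omega}$; hence Proposition~\ref{propperp}$(5)$, applied with $S=MM^{*\omega}$ and with $S=M+M^{*\omega}$, gives $\geigensp{S}{\alpha}\subset\geigensp{S}{\beta}^{\perp\omega}$ whenever $\alpha\neq\beta$. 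By Theorem~\ref{thm geigensp}, $W_i=\geigensp{MM^{*\omega}}{\lambda_i\mu_i}\cap\geigensp{M+M^{*\omega}}{\lambda_i+\mu_i}$. For $i\neq j$ the pairs $(\lambda_i\mu_i,\lambda_i+\mu_i)$ and $(\lambda_j\mu_j,\lambda_j+\mu_j)$ must differ, because otherwise $\lambda_i,\mu_i$ and $\lambda_j,\mu_j$ would be the roots of the common monic quadratic $X^2-(\lambda_i+\mu_i)X+\lambda_i\mu_i$, forcing $(\lambda_i-s)(\mu_i-s)=(\lambda_j-s)(\mu_j-s)$. Thus either $\lambda_i\mu_i\neq\lambda_j\mu_j$ or $\lambda_i+\mu_i\neq\lambda_j+\mu_j$, and in either case the orthogonality above gives $W_i\subset W_j^{\perp\omega}$, which is $(3)$.

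Finally, to see that each $W_i$ is a symplectic subspace: by $(3)$ we have $\bigoplus_{j\neq i}W_j\subset W_i^{\perp\omega}$, while Proposition~\ref{propperp}$(1)$ gives $\dim_{\cfield}W_i^{\perp\omega}=2n-2m_i=\sum_{j\neq i}\dim_{\cfield}W_j$; hence $W_i^{\perp\omega}=\bigoplus_{j\neq i}W_j$, and then $W_i\cap W_i^{\perp\omega}=\{0\}$ by directness of the decomposition $(2)$. This finishes $(4)$. I do not expect a genuine obstacle: the only points needing care are feeding the multiplicities $2m_i$ (not $m_i$) into Corollary~\ref{corollary decomposition}, and using that distinct polynomial factors $(\lambda_i-s)(\mu_i-s)$ force distinct unordered pairs $\{\lambda_i,\mu_i\}$; beyond that, everything is already available in Sections~\ref{symplectic linear algebra}--\ref{characteristic polynomial of two endomorphisms}.
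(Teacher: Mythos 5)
Your argument is correct, and for items $(3)$ and $(4)$ it takes a genuinely different route from the paper. For $(1)$, $(2)$ and the invariance half of $(4)$ you and the paper do essentially the same thing: the paper invokes Theorem~\ref{thm geigensp} to identify $\symplgeigensp{\omega}{M}{\lambda_i}{\mu_i}\otimes_{\cfield}\ccfield$ with $\geigensp{\ssendo}{(\lambda_i-s)(\mu_i-s)}$ and reads off the dimensions from $\symplcharpoly^{\omega}_M(s,t)^2=\varphi_{\ssendo}(t)$, which is exactly what Corollary~\ref{corollary decomposition} packages for you (with the multiplicities $2m_i$, as you correctly note). Where you diverge is in the symplectic part. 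The paper stays with the single self-adjoint endomorphism $\ssendo$ over the extension field $\ccfield$: Proposition~\ref{propperp}$(5)$ gives the mutual orthogonality of its generalized eigenspaces and Proposition~\ref{prop skew ham sympl subsp} gives their symplecticity, and both statements descend to $V$ through the tensor identity. You instead work entirely over $\cfield$ with the two self-adjoint endomorphisms $MM^{*\omega}$ and $M+M^{*\omega}$, using the first equality of Theorem~\ref{thm geigensp} to write $W_i=\geigensp{MM^{*\omega}}{\lambda_i\mu_i}\cap\geigensp{M+M^{*\omega}}{\lambda_i+\mu_i}$; your observation that distinct factors $(\lambda_i-s)(\mu_i-s)$ force the sum or the product of the eigenvalue pair to differ is exactly the point that makes this work. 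You then replace Proposition~\ref{prop skew ham sympl subsp} by a dimension count, deducing $W_i^{\perp\omega}=\bigoplus_{j\neq i}W_j$ from $(1)$--$(3)$. The trade-off: your version avoids extending scalars for the symplectic geometry and is slightly more self-contained (it does not need Proposition~\ref{prop skew ham sympl subsp} at all), at the cost of a small case analysis; the paper's version is more uniform, since the one endomorphism $\ssendo$ over $\ccfield$ encodes both the product and the sum at once.
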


\begin{proof}
Let $M_s^{\omega}=(M-s\idmap{})(M^{*\omega}-s\idmap{})$.
By Theorem~\ref{thm geigensp},
 we have
\begin{equation}\label{eq generalize eigensp}
\symplgeigensp{\omega}{M}{\lambda_i}{\mu_i}\otimes_{\cfield}\ccfield
=\geigensp{M_s^{\omega}}{(\lambda_i-s)(\mu_i-s)}.
\end{equation}
Since $\symplcharpoly^{\omega}_M(s,t)^2=\varphi_{\ssendo}(t)$, this implies $(1)$ and $(2)$.
The item $(5)$ in 
Proposition~\ref{propperp} shows that 
$
\geigensp{M_s^{\omega}}{(\lambda_i-s)(\mu_i-s)}
\subset 
\left(\geigensp{M_s^{\omega}}{(\lambda_j-s)(\mu_j-s)}\right)^{\perp\omega}.
$
Moreover,
Proposition~\ref{prop skew ham sympl subsp} gives that $\geigensp{M_s^{\omega}}{(\lambda_i-s)(\mu_i-s)}
$ is a symplectic subspace of $(\exspace, \omega)$. 
Hence, from \eqref{eq generalize eigensp}, we get $(3)$ and $(4)$.
\end{proof}
In the symplectic case, we have a stronger result than Proposition~\ref{prop eigenvalue eigenvector}.
\begin{proposition}\label{lemma sympl eigen vector}
Let $M\in \operatorname{End}(V)$ and $\lambda,\mu\in \cfield$.
Suppose that $MM^{*\omega}=M^{*\omega}M$. Then
$
\symplcharpoly^{\omega}_M(s,(\lambda-s)(\mu-s))=0$ if and only if $\eigensp{M}{\lambda}\cap \eigensp{M^{*\omega}}{\mu} \neq \{0\}$.
\end{proposition}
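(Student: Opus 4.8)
The plan is to prove the equivalence by relating $\symplcharpoly^{\omega}_M(s,(\lambda-s)(\mu-s))=0$ to the non-vanishing of a generalized eigenspace, and then using the symplectic normality hypothesis to upgrade from generalized eigenspaces to honest eigenspaces. First I would observe that by the identity in the proof of Proposition~\ref{prop symplectic char poly}(3), we have $\symplcharpoly^{\omega}_M(s,(\lambda-s)(\mu-s))=\pfaffian_{\omega}(\omega_{MM^{*\omega}-\lambda\mu E}-s\,\omega_{M+M^{*\omega}-(\lambda+\mu)E})$; combined with Proposition~\ref{prop symplectic char poly}(6) and the definition of the characteristic polynomial of $\ssendo$, the vanishing of this quantity is equivalent to $(\lambda-s)(\mu-s)$ being an eigenvalue of $M_s^{\omega}:=(M-s\idmap{})(M^{*\omega}-s\idmap{})$ over $\ccfield$. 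Here I use that $M$ and $M^{*\omega}$ commute, so Proposition~\ref{prop eigenvalue eigenvector} applies with $N=M^{*\omega}$: this vanishing is equivalent to $\eigensp{M,M^{*\omega}}{\lambda,\mu}\neq\{0\}$, i.e. to $\bigl(\eigensp{M}{\lambda}\cap\eigensp{M^{*\omega}}{\mu}\bigr)+\bigl(\eigensp{M}{\mu}\cap\eigensp{M^{*\omega}}{\lambda}\bigr)\neq\{0\}$.

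The remaining task is to show that this sum is nonzero if and only if the single intersection $\eigensp{M}{\lambda}\cap\eigensp{M^{*\omega}}{\mu}$ is nonzero. The ``if'' direction is immediate. For the converse, suppose $\eigensp{M}{\mu}\cap\eigensp{M^{*\omega}}{\lambda}\neq\{0\}$; I want to deduce $\eigensp{M}{\lambda}\cap\eigensp{M^{*\omega}}{\mu}\neq\{0\}$ as well. The key symmetry to exploit is Proposition~\ref{prop symplectic char poly}(5), which gives $\symplcharpoly^{\omega}_{M^{*\omega}}(s,t)=\symplcharpoly^{\omega}_M(s,t)$, together with the fact that $(M^{*\omega})^{*\omega}=M$. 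Applying the first half of the argument to the endomorphism $M^{*\omega}$ in place of $M$ (which is also symplectically normal), $\symplcharpoly^{\omega}_{M^{*\omega}}(s,(\lambda-s)(\mu-s))=0$ is equivalent to $\eigensp{M^{*\omega}}{\lambda}\cap\eigensp{M}{\mu}\neq\{0\}$; but by Proposition~\ref{prop symplectic char poly}(5) the left-hand polynomial equals $\symplcharpoly^{\omega}_M(s,(\lambda-s)(\mu-s))$, which is then equivalent to $\eigensp{M}{\lambda}\cap\eigensp{M^{*\omega}}{\mu}\neq\{0\}$. Chaining these equivalences closes the loop.

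Alternatively, and perhaps more transparently, I would invoke Lemma~\ref{lemma sympl decomp}: after passing to a splitting field extension so that all eigenvalues lie in the base field (the statement and the eigenspaces behave well under extension of scalars), $V$ decomposes as a symplectically orthogonal direct sum of the symplectic subspaces $W_i=\symplgeigensp{\omega}{M}{\lambda_i}{\mu_i}$, each of which is $(M,M^{*\omega})$-invariant. On the block $W$ corresponding to the pair $\{\lambda,\mu\}$ with $\lambda\neq\mu$, Lemma~\ref{lemma Lag split} shows that $\geigensp{M}{\lambda}\cap\geigensp{M^{*\omega}}{\mu}$ and $\geigensp{M}{\mu}\cap\geigensp{M^{*\omega}}{\lambda}$ are Lagrangian in $W$, hence of equal dimension $\tfrac12\dim W$; since $M|_W$ is normal and (after reducing to the diagonalizable situation via the hypothesis that $M$ is symplectically normal, when combined with diagonalizability) one has $\eigensp{}{}=\geigensp{}{}$ on these blocks in the relevant cases, so $\eigensp{M}{\lambda}\cap\eigensp{M^{*\omega}}{\mu}$ and $\eigensp{M}{\mu}\cap\eigensp{M^{*\omega}}{\lambda}$ have the same dimension; in particular one is nonzero precisely when the other is.

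I expect the main obstacle to be the bookkeeping around generalized versus ordinary eigenspaces: Proposition~\ref{prop eigenvalue eigenvector} is phrased for ordinary eigenspaces $\eigensp{M,N}{\lambda,\mu}$, so the cleanest route is the first one, extracting the dimensional symmetry $\dim(\eigensp{M}{\lambda}\cap\eigensp{M^{*\omega}}{\mu})=\dim(\eigensp{M}{\mu}\cap\eigensp{M^{*\omega}}{\lambda})$ directly from the invariance $\symplcharpoly^{\omega}_{M^{*\omega}}=\symplcharpoly^{\omega}_M$ and the multiplicities read off from the factorization of $\symplcharpoly^{\omega}_M(s,t)$ via Lemma~\ref{lemma sympl decomp}; I would need to be careful that passing to a field extension where $M$ splits does not change whether the intersections are zero, which is standard since dimensions of kernels of polynomial expressions in $M$ and $M^{*\omega}$ are preserved under base change.
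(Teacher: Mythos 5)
Your reduction via Proposition~\ref{prop eigenvalue eigenvector} (with $N=M^{*\omega}$) to the statement ``$\eigensp{M,M^{*\omega}}{\lambda,\mu}\neq\{0\}$ iff $\eigensp{M}{\lambda}\cap\eigensp{M^{*\omega}}{\mu}\neq\{0\}$'' is exactly the paper's first step, and your ``alternative'' route is in substance the paper's proof. But the route you prefer --- the symmetry argument --- is circular. What ``the first half of the argument'' applied to $M^{*\omega}$ actually gives is the equivalence of $\symplcharpoly^{\omega}_{M^{*\omega}}(s,(\lambda-s)(\mu-s))=0$ with $\eigensp{M^{*\omega},M}{\lambda,\mu}\neq\{0\}$; by definition this sum of two intersections is \emph{literally the same subspace} as $\eigensp{M,M^{*\omega}}{\lambda,\mu}$ (the two summands are merely swapped), so combining it with $\symplcharpoly^{\omega}_{M^{*\omega}}=\symplcharpoly^{\omega}_M$ returns a tautology and isolates nothing. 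To get the equivalence with the \emph{single} intersection $\eigensp{M^{*\omega}}{\lambda}\cap\eigensp{M}{\mu}$, as you claim, you would need the full Proposition already established for $M^{*\omega}$, which is exactly as hard as establishing it for $M$. For the same reason the polynomial invariance cannot, by itself, deliver the dimensional symmetry you invoke in your last paragraph; the fact that \emph{both} individual pieces are nonzero has to come from the geometry, namely Lemma~\ref{lemma Lag split}: since $\symplgeigensp{\omega}{M}{\lambda}{\mu}$ is a symplectic subspace (Lemma~\ref{lemma sympl decomp}), both $\geigensp{M}{\lambda}\cap\geigensp{M^{*\omega}}{\mu}$ and $\geigensp{M}{\mu}\cap\geigensp{M^{*\omega}}{\lambda}$ are Lagrangian in it, hence both nonzero.

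The second gap is the descent from generalized to ordinary eigenspaces. You appeal to ``reducing to the diagonalizable situation'' and to $\eigensp{}{}=\geigensp{}{}$, but diagonalizability is not a hypothesis of this Proposition (only $MM^{*\omega}=M^{*\omega}M$ is), and without it those spaces genuinely differ. The correct finishing move, which is what the paper's terse ``since $MM^{*\omega}=M^{*\omega}M$'' is shorthand for, is: the nonzero subspace $U=\geigensp{M}{\lambda}\cap\geigensp{M^{*\omega}}{\mu}$ is invariant under both $M$ and $M^{*\omega}$, and $(M-\lambda\idmap{})|_U$ and $(M^{*\omega}-\mu\idmap{})|_U$ are commuting nilpotent operators, so they admit a common kernel vector; hence $\eigensp{M}{\lambda}\cap\eigensp{M^{*\omega}}{\mu}\neq\{0\}$. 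With the circular route discarded and this step repaired, your argument coincides with the paper's.
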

\begin{proof}
By Proposition~\ref{prop eigenvalue eigenvector}, it is enough to show that $\sympleigensp{\omega}{M}{\lambda}{\mu}\neq \{0\}$ implies $\eigensp{M}{\lambda}\cap \eigensp{M^{*\omega}}{\mu} \neq \{0\}$. The case where $\lambda=\mu$ is clear. Suppose that $\lambda \neq \mu$.
Lemma~\ref{lemma sympl decomp} shows that 
$\symplgeigensp{\omega}{M}{\lambda}{\mu}$ is a symplectic subspace of $(V,\omega)$. 
Hence, by Lemma~\ref{lemma Lag split}, we have 
$\geigensp{M}{\lambda}\cap \geigensp{M^{*\omega}}{\mu} \neq \{0\}$. Since $MM^{*\omega}=M^{*\omega}M$, this implies
$\eigensp{M}{\lambda}\cap \eigensp{M^{*\omega}}{\mu} \neq \{0\}$.
\end{proof}
It is proved in \cite[Theorem~13]{de2016diagonalizability} that if an endomorphism is symplectically normal and diagonalizable, then the endomorphism is symplectically diagonalizable. 
The following theorem is an improvement of this fact.
\begin{theorem}\label{theorem complete inv}
Let $M\in \operatorname{End}(V)$.  Suppose that $MM^{*\omega}=M^{*\omega}M$ and that $M$ is diagonalizable. 
Then there exists a symplectic basis  
$(e_1,\dots, e_n, f_1,\dots,f_n)$
such that 
\[
Me_i=\lambda_i e_i, \ M f_i=\mu_i f_i, \quad i\in \{1\dots, n\}
\]
where the symplectic characteristic polynomial  $\symplcharpoly^{\omega}_M(s,t)$ is of the form
\[
\symplcharpoly^{\omega}_M(s,t)=\prod_{i=1}^n\{(\lambda_i-s)(\mu_i-s)-t\}, \quad \lambda_i,\mu_i \in \cfield.
\]
\end{theorem}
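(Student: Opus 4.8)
The plan is to reduce the statement, via the symplectic block decomposition of Lemma~\ref{lemma sympl decomp}, to constructing a symplectic eigenbasis inside each block, handling a block whose eigenvalue pair is repeated trivially and a block whose pair consists of two distinct values by the Lagrangian-splitting Lemmas~\ref{lemma Lag split} and~\ref{lemma sympl basis}.

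First I would note that, $M$ being diagonalizable, all eigenvalues of $M$ lie in $\cfield$; hence the symplectic characteristic polynomial factors as $\symplcharpoly^{\omega}_M(s,t)=\prod_{j=1}^{k}\{(\lambda_j-s)(\mu_j-s)-t\}^{m_j}$ with the $k$ factors pairwise distinct (this is the content of the Proposition stated just before Lemma~\ref{lemma sympl decomp}, applied through $\symplcharpoly^{\omega}_M(s,t)^2=\varphi_{\ssendo}(t)$). Lemma~\ref{lemma sympl decomp} then gives the symplectically orthogonal decomposition $V=\bigoplus_{j=1}^{k}W_j$, where $W_j:=\symplgeigensp{\omega}{M}{\lambda_j}{\mu_j}$ is a $2m_j$-dimensional $(M,M^{*\omega})$-invariant symplectic subspace; in particular $\sum_j m_j=n$. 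Since the concatenation of symplectic bases of the $W_j$ (the $e$'s listed first, then the $f$'s) is a symplectic basis of $V$, it suffices to produce inside each $W_j$ a symplectic basis consisting of eigenvectors of $M$ that realizes the pair $(\lambda_j,\mu_j)$ exactly $m_j$ times; the claimed product formula for $\symplcharpoly^{\omega}_M$ then follows by relabelling the $n$ resulting pairs.

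Fix $j$ and abbreviate $\lambda=\lambda_j$, $\mu=\mu_j$, $W=W_j$, $m=m_j$. If $\lambda=\mu$, then $W=\geigensp{M}{\lambda}\cap\geigensp{M^{*\omega}}{\lambda}\subseteq\geigensp{M}{\lambda}=\eigensp{M}{\lambda}$, the last equality by diagonalizability of $M$; hence $M|_{W}=\lambda\idmap{}$, and any symplectic basis of $W$, which exists by Proposition~\ref{prop exist sympl basis}, consists of eigenvectors of $M$ and contributes $m$ pairs $(\lambda,\lambda)$. If $\lambda\neq\mu$, I would set $L_1=\eigensp{M}{\lambda}\cap\geigensp{M^{*\omega}}{\mu}$ and $L_2=\eigensp{M}{\mu}\cap\geigensp{M^{*\omega}}{\lambda}$; diagonalizability of $M$ gives $W=L_1+L_2$, while $L_1\subseteq\eigensp{M}{\lambda}$ and $L_2\subseteq\eigensp{M}{\mu}$ together with $\lambda\neq\mu$ force $L_1\cap L_2=\{0\}$. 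By Lemma~\ref{lemma Lag split} (applicable since $W$ is symplectic and $\lambda\neq\mu$), $L_1$ and $L_2$ are Lagrangian in $(W,\omega|_{W})$, so choosing a basis of $L_1$ and feeding it to Lemma~\ref{lemma sympl basis} yields a complementary basis of $L_2$ making the union a symplectic basis of $W$; each of its vectors lies in $\eigensp{M}{\lambda}$ or $\eigensp{M}{\mu}$, hence is an eigenvector of $M$, and this block contributes exactly $m$ pairs $(\lambda,\mu)$.

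I do not expect a genuine obstacle, since Lemmas~\ref{lemma sympl decomp}, \ref{lemma Lag split} and~\ref{lemma sympl basis} already carry the load; the only place demanding a little care is the $\lambda\neq\mu$ block, where one must verify that $L_1,L_2$ are complementary Lagrangians of the correct dimension and that the symplectic basis returned by Lemma~\ref{lemma sympl basis} stays inside the two eigenspaces of $M$ — which it does, being assembled from bases of $L_1$ and $L_2$. Concatenating these bases over $j=1,\dots,k$ and relabelling the pairs then gives the asserted symplectic basis $(e_1,\dots,e_n,f_1,\dots,f_n)$ with $Me_i=\lambda_i e_i$, $Mf_i=\mu_i f_i$ and $\symplcharpoly^{\omega}_M(s,t)=\prod_{i=1}^n\{(\lambda_i-s)(\mu_i-s)-t\}$.
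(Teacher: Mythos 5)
Your proof is correct and uses the same ingredients as the paper's: the symplectic block decomposition of Lemma~\ref{lemma sympl decomp}, Proposition~\ref{prop exist sympl basis} for the $\lambda=\mu$ blocks, and Lemmas~\ref{lemma Lag split} and~\ref{lemma sympl basis} for the $\lambda\neq\mu$ blocks. The only difference is organizational — the paper peels off one symplectic pair $(e_1,f_1)$ at a time by induction on $n$ and restricts to $W^{\perp\omega}$, whereas you fill in each generalized eigenspace block wholesale — but this is the same argument.
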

\begin{proof}
We proof by induction on $n$.
The item $(4)$ in
Lemma~\ref{lemma sympl decomp} shows that  $\sympleigensp{\omega}{M}{\lambda_1}{\mu_1}$ is a symplectic subspaces of $(V,\omega)$.
There exist $e_1,f_1\in V$ such that 
\[
e_1\in \eigensp{M}{\lambda_1}\cap \eigensp{M^{*\omega}}{\mu_1}, \ f_1\in \eigensp{M}{\mu_1}\cap \eigensp{M^{*\omega}}{\lambda_1}, \ \omega(e_1,f_1)=1.
\]
Indeed, the case where $\lambda=\mu$ follows from Proposition~\ref{prop exist sympl basis}  and the case where $\lambda \neq \mu$ follows from Lemma~\ref{lemma Lag split} and \ref{lemma sympl basis}.
Hence, the assertion holds for $n=1$.
We assume that the assertion holds for $n-1$.
Let $W$ be a subspace generated by $e_1,f_1$. Then $W$ is an $(M,M^{*\omega})$-invariant symplectic subspace of $(V,\omega)$ and by the item $(4)$ in Proposition~\ref{propperp}, so is $W^{\perp\omega}$. 
Let $M_1=M|_{W^{\perp\omega}}$ and $\omega_1=\omega|_{W^{\perp\omega}}$.
Since $M_1^{*\omega_1}=M^{*\omega}|_{W^{\perp\omega}}$, we have
$M_1M_1^{*\omega_1}=M_1^{*\omega_1}M_1$.
It is clear that  $M_1$ is diagonalizable.  The induction hypothesis shows that there exists a symplectic basis 
$(e_2,\dots,e_n,f_1,\dots,f_n)$ of $(W^{\perp\omega}, \omega|_{W^{\perp\omega}})$ such that 
$
Me_i=\lambda_i e_i, \ M f_i=\mu_i f_i
$
for $i\in \{2,\dots, n\}$.
It is clear that $(e_1,\dots,e_n,f_1,\dots, f_n)$ is a symplectic basis which is desired.
\end{proof}
\begin{corollary}\label{corollary sympl diago}
The symplectic characteristic polynomial is a complete invariant with respect to symplectic similarity for symplectically diagonalizable endomorphisms. 
In particular, a symplectically diagonalizable endomorphism $M$ is symplectically similar to the symplectic adjoint endomorphism $M^{*\omega}$.
\end{corollary}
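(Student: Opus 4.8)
The plan is to reduce the statement to Theorem~\ref{theorem complete inv} together with unique factorization in the polynomial ring $\cfield[s,t]$. One implication is already available: Proposition~\ref{prop symplectic char poly}~$(5)$ shows that $\symplcharpoly^{\omega}_M(s,t)$ is invariant under symplectic similarity, so it only remains to prove that two symplectically diagonalizable endomorphisms $M,N$ with $\symplcharpoly^{\omega}_M=\symplcharpoly^{\omega}_N$ are symplectically similar.

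Since $M$ is symplectically diagonalizable it is symplectically normal and diagonalizable, so Theorem~\ref{theorem complete inv} furnishes a symplectic basis $\mathcal{B}$ with $[M]_{\mathcal{B}}=\operatorname{diag}(\lambda_1,\dots,\lambda_n,\mu_1,\dots,\mu_n)$ and $\symplcharpoly^{\omega}_M(s,t)=\prod_{i=1}^n\{(\lambda_i-s)(\mu_i-s)-t\}$, and likewise a symplectic basis $\mathcal{B}'$ with $[N]_{\mathcal{B}'}=\operatorname{diag}(\lambda'_1,\dots,\lambda'_n,\mu'_1,\dots,\mu'_n)$ and $\symplcharpoly^{\omega}_N(s,t)=\prod_{i=1}^n\{(\lambda'_i-s)(\mu'_i-s)-t\}$. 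Each factor $(\lambda-s)(\mu-s)-t$ is primitive of degree one in $t$ over $\cfield[s]$, hence irreducible in $\cfield[s,t]$; and two such factors, having the same coefficient $-1$ of $t$, are associate if and only if equal, i.e.\ if and only if $(\lambda-s)(\mu-s)=(\lambda'-s)(\mu'-s)$, which since $\lambda,\mu,\lambda',\mu'\in\cfield$ happens exactly when $\{\lambda,\mu\}=\{\lambda',\mu'\}$ as multisets. Therefore the hypothesis $\symplcharpoly^{\omega}_M=\symplcharpoly^{\omega}_N$ and unique factorization in the UFD $\cfield[s,t]$ yield a permutation of $\{1,\dots,n\}$ matching the multisets $\{\{\lambda_i,\mu_i\}\}_i$ and $\{\{\lambda'_i,\mu'_i\}\}_i$.

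It then remains to rearrange $\mathcal{B}'$ into a symplectic basis in which $N$ has the same matrix as $M$ in $\mathcal{B}$. Permuting the hyperbolic pairs $(e'_i,f'_i)$ according to the matching permutation is a symplectic change of basis; and whenever the roles of $\lambda'_i$ and $\mu'_i$ need to be exchanged, replacing $(e'_i,f'_i)$ by $(f'_i,-e'_i)$ is again a legitimate symplectic pair since $\omega(f'_i,-e'_i)=\omega(e'_i,f'_i)=1$. After these modifications one obtains a symplectic basis $\mathcal{B}''$ with $[N]_{\mathcal{B}''}=[M]_{\mathcal{B}}$, so $M$ and $N$ are symplectically similar by Proposition~\ref{prop sympl similar}. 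For the final sentence, $M^{*\omega}$ is symplectically diagonalizable as well --- it is symplectically normal because $M$ is, and it is diagonalizable because it is similar to $M^{*}$ --- and $\symplcharpoly^{\omega}_{M^{*\omega}}=\symplcharpoly^{\omega}_M$ by Proposition~\ref{prop symplectic char poly}~$(5)$, so the completeness just proved shows that $M$ is symplectically similar to $M^{*\omega}$.

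I expect no genuine obstacle, since Theorem~\ref{theorem complete inv} carries the substantive content; the remaining work is the unique-factorization bookkeeping and the verification that the basis rearrangements are symplectic. The only point requiring attention is that last verification --- a naive interchange $e'_i\leftrightarrow f'_i$ would change the sign of $\omega(e'_i,f'_i)$, so one must use $(f'_i,-e'_i)$ instead --- together with the observation that the unordered pairs $\{\lambda_i,\mu_i\}$, rather than the ordered ones, are exactly the data recorded by $\symplcharpoly^{\omega}_M$.
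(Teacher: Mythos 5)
Your proof is correct and follows the same route as the paper: reduce to Theorem~\ref{theorem complete inv} and conclude via Proposition~\ref{prop sympl similar}. The paper's own proof is terser --- it simply asserts that the theorem yields symplectic bases $\mathcal{B}$, $\mathcal{B}'$ with $[M]_{\mathcal{B}}=[N]_{\mathcal{B}'}$ --- whereas you make explicit the unique-factorization matching of the unordered pairs $\{\lambda_i,\mu_i\}$ and the symplectic rearrangement $(e'_i,f'_i)\mapsto(f'_i,-e'_i)$ that the paper leaves implicit; these details are correct and worth recording.
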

\begin{proof}
Let $M$, $N$ be symplectically diagonalizable endomorphisms. Suppose that $\symplcharpoly^{\omega}_M(s,t)=\symplcharpoly^{\omega}_N(s,t)$. Theorem~\ref{theorem complete inv} shows that there exist symplectic bases $\mathcal{B}$, $\mathcal{B}'$ such that $[M]_{\mathcal{B}}=[N]_{\mathcal{B}'}$. Hence, Proposition~\ref{prop sympl similar} gives that $M$ is symplectically similar to $N$. 
\end{proof}

It is well-known that if the number of distinct eigenvalues of an endomorphism is equal to $\dim V$, then the endomorphism is diagonalizable. 
In the symplectic case, we have the following two theorems.
\begin{theorem}\label{theorem simul sympl diagonal}
Let $M\in \operatorname{End}(V)$.
Suppose that $MM^{*\omega}=M^{*\omega}M$ and that the symplectic characteristic polynomial $\symplcharpoly^{\omega}_M(s,t)$ is of the form
\[
\symplcharpoly^{\omega}_M(s,t)=\prod_{i=1}^n\{(\lambda_i-s)(\mu_i-s)-t\}, \quad \lambda_i,\mu_i \in \cfield
\] 
and 
$
(\lambda_i-s)(\mu_i-s) \neq (\lambda_j-s)(\mu_j-s) $ for $i\neq j.
$
Then 
there exists a symplectic basis 
$(e_1,\dots, e_n,f_1,\dots, f_n)$ 
such that 
\begin{align*}
MM^{*\omega}e_i&=\lambda_i\mu_i e_i, & (M+M^{*\omega})e_i&=(\lambda_i+\mu_i) e_i, \\
MM^{*\omega}f_i&=\lambda_i\mu_i f_i, & (M+M^{*\omega})f_i&=(\lambda_i+\mu_i) f_i, & i\in\{1,\dots,n\}.
\end{align*}
\end{theorem}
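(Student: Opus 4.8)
The statement I want to prove is exactly the "decomposition into simultaneous eigenspaces of $MM^{*\omega}$ and $M+M^{*\omega}$" under the hypothesis that the symplectic characteristic polynomial factors into $n$ distinct quadratic factors $(\lambda_i - s)(\mu_i - s) - t$. My plan is to apply Lemma~\ref{lemma sympl decomp} to break $V$ into the symplectically orthogonal direct sum of the symplectic subspaces $W_i = \symplgeigensp{\omega}{M}{\lambda_i}{\mu_i}$, reduce to the case $n=1$ (i.e. each $W_i$ is $2$-dimensional by item $(1)$ of that lemma, since all $m_i = 1$), and on each $2$-dimensional piece use the explicit formulas $M_1 M_1^{*\omega} = (\det M_1)E$ and $M_1 + M_1^{*\omega} = (\trace M_1)E$ established in Section~\ref{symplectic linear algebra} right after the matrix computation of $[M^{*\omega}]_{\mathcal{B}}$.

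First I would set $W_i = \symplgeigensp{\omega}{M}{\lambda_i}{\mu_i}$ and invoke Lemma~\ref{lemma sympl decomp}: since the factorization has all exponents $m_i = 1$, part~$(1)$ gives $\dim_{\cfield} W_i = 2$, part~$(2)$ gives $V = \bigoplus_{i=1}^n W_i$, part~$(3)$ gives symplectic orthogonality, and part~$(4)$ gives that each $W_i$ is an $(M, M^{*\omega})$-invariant symplectic subspace. Choosing a symplectic basis of each $W_i$ (which exists by Proposition~\ref{prop exist sympl basis}) and concatenating in the order $e_1, \dots, e_n, f_1, \dots, f_n$ yields a symplectic basis of $(V,\omega)$, because the $W_i$ are pairwise symplectically orthogonal and each is symplectic. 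So it remains to check that on $W_i$ the vectors $MM^{*\omega}$ and $M + M^{*\omega}$ act as the claimed scalars.

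For the scalar identification, write $M_i = M|_{W_i}$ and note $M_i^{*\omega|_{W_i}} = M^{*\omega}|_{W_i}$ (as observed in the proof of Theorem~\ref{theorem complete inv}). Since $\dim_{\cfield} W_i = 2$, the formulas quoted after the display for $[M^{*\omega}]_{\mathcal{B}}$ give $M_i M_i^{*\omega} = (\det M_i)\,E$ and $M_i + M_i^{*\omega} = (\trace M_i)\,E$ as endomorphisms of $W_i$. Now I need $\det M_i = \lambda_i \mu_i$ and $\trace M_i = \lambda_i + \mu_i$. For this I use Theorem~\ref{thm geigensp}, which via \eqref{eq generalize eigensp} identifies $W_i \otimes_{\cfield} \ccfield$ with $\geigensp{M_s^\omega}{(\lambda_i - s)(\mu_i - s)}$ where $M_s^\omega = \ssendo$; the symplectic characteristic polynomial being $\prod_i\{(\lambda_i - s)(\mu_i - s) - t\}$ with distinct factors means the characteristic polynomial of $M_s^\omega$ restricted to $W_i \otimes \ccfield$ is $\{(\lambda_i - s)(\mu_i - s) - t\}^2$, hence $(M_i - sE)(M_i^{*\omega} - sE) = (\lambda_i - s)(\mu_i - s)E$ on $W_i$ as a polynomial identity in $s$ (the $2$-dimensional scalar formula makes $(M_i - sE)(M_i^{*\omega} - sE) = (\det(M_i - sE))E$, and this determinant equals $(\lambda_i - s)(\mu_i - s)$). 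Expanding in $s$ and comparing the constant term gives $\det M_i = \lambda_i \mu_i$, and comparing the coefficient of $s$ gives $\trace M_i = \lambda_i + \mu_i$. Setting $e_i, f_i$ to be the chosen symplectic basis of $W_i$ then gives the claimed eigenvalue equations immediately.

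**Main obstacle.** The only genuinely substantive point is the clean identification $\det M_i = \lambda_i\mu_i$, $\trace M_i = \lambda_i + \mu_i$ on each $2$-dimensional block — i.e. pinning down that the restriction of $M_s^\omega$ to the generalized eigenspace $W_i\otimes\ccfield$ has characteristic polynomial exactly $\{(\lambda_i-s)(\mu_i-s)-t\}^2$ and then transferring that back to a statement about $\det$ and $\trace$ of $M_i$ over $\cfield$ via the $2$-dimensional formula $\ssendo|_{W_i} = (\det(M_i - sE))E$. Everything else (existence of symplectic bases, orthogonal direct sum, invariance) is a direct appeal to Lemma~\ref{lemma sympl decomp}, Proposition~\ref{prop exist sympl basis}, and Proposition~\ref{propperp}. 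I expect this to be a short proof once the $n=1$ reduction is in place.
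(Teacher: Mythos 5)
Your proposal is correct and follows essentially the same route as the paper: decompose $V$ via Lemma~\ref{lemma sympl decomp} into pairwise symplectically orthogonal $2$-dimensional $(M,M^{*\omega})$-invariant symplectic blocks $W_i$, use the $2$-dimensional identities $M_iM_i^{*\omega_i}=(\det M_i)\idmap{}$ and $M_i+M_i^{*\omega_i}=(\trace M_i)\idmap{}$, and identify $\det M_i=\lambda_i\mu_i$, $\trace M_i=\lambda_i+\mu_i$ before concatenating symplectic bases of the blocks. The only (immaterial) difference is how you pin down the characteristic polynomial of $M_i$: you argue through the generalized eigenspace identification of Theorem~\ref{thm geigensp} and the polynomial identity $\sendo{M_i}{M_i^{*\omega_i}}=\det(M_i-s\idmap{})\idmap{}$, whereas the paper reads it off from $\varphi_{M_i}(s)=\symplcharpoly^{\omega_i}_{M_i}(s,0)$.
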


\begin{proof}
Let $W_i=\symplgeigensp{\omega}{M}{\lambda_i}{\mu_i}$ and let $M_i=M|_{W_i}$, $\omega_i=\omega|_{W_i}$.
Lemma~\ref{lemma sympl decomp} shows that $\dim_{\cfield} W_i=2$ and that $W_i$ is symplectic. Hence, we get $\symplcharpoly^{\omega_i}_{M_i}(s,t)=\{(\lambda_i-s)(\mu_i-s)-t\}$, which implies that  $\varphi_{M_i}(t)=(\lambda_i-s)(\mu_i-s)$.  
It follows from $M^{*\omega}|_{W_i}=(M_i)^{*\omega_i}$ that 
\begin{align*}
(MM^{*\omega})|_{W_i}&=M_i M_i^{*\omega_i}=(\det M_i) \idmap{}=\lambda_i\mu_i \idmap{}, \\
(M+M^{*\omega})|_{W_i}&=M_i+M_i^{*\omega_i}=(\trace  M_i) \idmap{}=(\lambda_i+\mu_i) \idmap{}.
\end{align*}
Since $W_i$ is symplectic, there exist $e_i,f_i\in W_i$ such that $\omega(e_i,f_i)=1$. The item $(3)$ in Lemma~\ref{lemma sympl decomp} shows that  
$(e_1,\dots, e_n,f_1\dots, f_n)$ is a symplectic basis, which completes the proof.
\end{proof}
\begin{theorem}\label{theorem sympl diagonal}
Under the same assumption as above, we assume further that
$\lambda_i \neq \mu_i$ for all $i$.
Then there exists a symplectic basis $(e_1,\dots, e_n, f_1,\dots, f_n)$ such that
\begin{align*}
Me_i&=\lambda_i e_i, & M^{*\omega}e_i&=\mu_i e_i, & \\
 M f_i&=\mu_i f_i, & M^{*\omega}f_i&=\lambda_i f_i, & i\in\{1,\dots,n\}.
\end{align*}
\end{theorem}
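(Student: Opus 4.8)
The plan is to reduce to the two-dimensional pieces supplied by Lemma~\ref{lemma sympl decomp}, exactly as in the proof of Theorem~\ref{theorem simul sympl diagonal}, and then exploit the extra hypothesis $\lambda_i\neq\mu_i$ to replace the arbitrary symplectic basis of each piece by one consisting of eigenvectors of $M$ (equivalently of $M^{*\omega}$). First I would invoke Lemma~\ref{lemma sympl decomp} to write $V=\bigoplus_{i=1}^n\symplgeigensp{\omega}{M}{\lambda_i}{\mu_i}$ as a symplectically orthogonal direct sum of the $(M,M^{*\omega})$-invariant $2$-dimensional symplectic subspaces $W_i=\symplgeigensp{\omega}{M}{\lambda_i}{\mu_i}$ (the indices run up to $n$ here, since by hypothesis the $n$ factors $(\lambda_i-s)(\mu_i-s)-t$ are pairwise distinct).

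Next I would analyse $M_i:=M|_{W_i}$ on the $2$-dimensional symplectic space $(W_i,\omega_i)$, $\omega_i=\omega|_{W_i}$. As in the proof of Theorem~\ref{theorem simul sympl diagonal}, the characteristic polynomial of $M_i$ is $(\lambda_i-s)(\mu_i-s)$, which now has two distinct roots; hence $M_i$ is diagonalizable on $W_i$ with $1$-dimensional eigenspaces $\eigensp{M_i}{\lambda_i}$ and $\eigensp{M_i}{\mu_i}$. Using $M^{*\omega}|_{W_i}=(M_i)^{*\omega_i}$ together with the identity $N+N^{*\omega}=(\trace N)\idmap{}$ valid on any $2$-dimensional symplectic space, I get $(M_i)^{*\omega_i}=(\lambda_i+\mu_i)\idmap{}-M_i$, so $\eigensp{M_i}{\lambda_i}=\eigensp{(M_i)^{*\omega_i}}{\mu_i}$ and $\eigensp{M_i}{\mu_i}=\eigensp{(M_i)^{*\omega_i}}{\lambda_i}$.

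Now, every $1$-dimensional subspace of a symplectic space is isotropic, so $\eigensp{M_i}{\lambda_i}$ and $\eigensp{M_i}{\mu_i}$ are complementary Lagrangian subspaces of $(W_i,\omega_i)$. Choosing a nonzero $e_i\in\eigensp{M_i}{\lambda_i}$ and applying Lemma~\ref{lemma sympl basis} inside $W_i$, I obtain $f_i\in\eigensp{M_i}{\mu_i}$ such that $(e_i,f_i)$ is a symplectic basis of $W_i$; by construction $Me_i=\lambda_i e_i$, $M^{*\omega}e_i=\mu_i e_i$, $Mf_i=\mu_i f_i$, $M^{*\omega}f_i=\lambda_i f_i$. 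Finally, since $W_i\subset(W_j)^{\perp\omega}$ for $i\neq j$ by Lemma~\ref{lemma sympl decomp}(3), the concatenation $(e_1,\dots,e_n,f_1,\dots,f_n)$ is a symplectic basis of $V$ with the stated eigenvalue relations. I do not expect a genuine obstacle: everything rests on the decomposition of Lemma~\ref{lemma sympl decomp} and the explicit form of the symplectic adjoint in dimension two, and the only point needing slight care is the normalization $\omega(e_i,f_i)=1$, which is precisely what Lemma~\ref{lemma sympl basis} provides. (Alternatively, one could run an induction on $n$ as in the proof of Theorem~\ref{theorem complete inv}, peeling off one $W_i$ at a time.)
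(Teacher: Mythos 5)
Your proof is correct, and its overall architecture coincides with the paper's: decompose $V$ into the two-dimensional symplectic blocks $W_i=\symplgeigensp{\omega}{M}{\lambda_i}{\mu_i}$ via Lemma~\ref{lemma sympl decomp}, produce a symplectic pair of eigenvectors inside each block, and concatenate using the orthogonality statement (3). The only genuine difference is the local step inside each $W_i$: the paper invokes Proposition~\ref{lemma sympl eigen vector} (which rests on Lemma~\ref{lemma Lag split}) to conclude directly that $\eigensp{M}{\lambda_i}\cap\eigensp{M^{*\omega}}{\mu_i}$ and $\eigensp{M}{\mu_i}\cap\eigensp{M^{*\omega}}{\lambda_i}$ are both one-dimensional, and then pairs them symplectically; you instead use the dimension-two identity $M_i+M_i^{*\omega_i}=(\trace M_i)\idmap{}$ to show $(M_i)^{*\omega_i}=(\lambda_i+\mu_i)\idmap{}-M_i$, so that the $M_i$-eigenspaces are automatically $(M_i)^{*\omega_i}$-eigenspaces with swapped eigenvalues, and then apply Lemma~\ref{lemma sympl basis} to the two complementary Lagrangian lines. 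Your version is more self-contained and elementary (it avoids the Lagrangian-splitting machinery entirely and is in the same spirit as the paper's proof of Theorem~\ref{theorem simul sympl diagonal}), while the paper's version reuses general results that also cover higher-dimensional blocks; both are complete, and your normalization $\omega(e_i,f_i)=1$ via Lemma~\ref{lemma sympl basis} is exactly what is needed.
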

\begin{proof}
Let $W_i=\symplgeigensp{\omega}{M}{\lambda_i}{\mu_i}$. 
Lemma~\ref{lemma sympl decomp} gives that $\dim_{\cfield}W=2$ and that $W_i$ is a symplectic subspace.  Hence, Proposition~\ref{lemma sympl eigen vector} shows that
%
\[
\dim_{\cfield}\left(\eigensp{M}{\lambda_i}\cap \eigensp{M^{*\omega}}{\mu_i}\right)=\dim_{\cfield}\left(\eigensp{M}{\mu_i}\cap \eigensp{M^{*\omega}}{\lambda_i}\right)=1.
\]
Since $W_i$ is symplectic, 
there exist $e_i, f_i\in V$ such that 
\[
e_i \in \eigensp{M}{\lambda_i}\cap \eigensp{M^{*\omega}}{\mu_i}, \ 
f_i \in  \eigensp{M}{\mu_i}\cap \eigensp{M^{*\omega}}{\lambda_i}, \ \omega(e_i, f_i)=1.
\]
The item $(3)$ in Lemma~\ref{lemma sympl decomp} shows that $(e_1,\dots, e_n, f_1,\dots, f_n)$ is a symplectic basis which is desired.
\end{proof}

\bibliographystyle{plain} 
\bibliography{SCP2024ref}

\end{document}